\documentclass[11pt]{amsart}
\usepackage{geometry,graphicx}

\geometry{left=2.54cm,right=2.54cm,top=2.54cm,bottom=2.54cm}
\usepackage{latexsym}
\usepackage{amssymb}
\usepackage[cp850]{inputenc}
\usepackage{epsfig}
\usepackage{epstopdf}
\usepackage{psfrag}
\usepackage{amsthm}
\usepackage{amscd}
\usepackage{amsmath}
\usepackage{amsfonts}
\usepackage{graphics}
\usepackage{enumerate}
\usepackage{color}
\theoremstyle{theorem}

\newtheorem{theorem}{Theorem}

\newtheorem{proposition}{Proposition}

\newtheorem{lemma}{Lemma}

\theoremstyle{definition}

\newtheorem*{remark}{Remark}

\newcommand{\BB}{{\mathcal B}}

\newcommand{\T}{{\mathcal T}}

\newcommand{\MM}{{\mathcal M}}

\newcommand{\com}[1]{{\rm Comb}_{#1}}

\newcommand{\commodgk}{{\rm Comb}_N{\rm Mod}_{g,k}}

\DeclareMathOperator{\Vol}{Vol}
\DeclareMathOperator{\link}{link}
\DeclareMathOperator{\Card}{Card}

\numberwithin{equation}{section}

\begin{document}

%




%

\title{Pants decompositions of random surfaces}




\author[L. Guth]{Larry Guth}

\address[Larry Guth] {Department of Mathematics, University of Toronto\\
 Toronto, Canada}

\email{lguth@math.utoronto.ca}

\author[H.~Parlier]{Hugo Parlier${}^\dagger$}
\address[Hugo Parlier]{Department of Mathematics, University of Fribourg\\
  Switzerland}
\email{hugo.parlier@gmail.com}
\thanks{${}^\dagger$Research supported by Swiss National Science Foundation grant number PP00P2\textunderscore 128557}

\author[R. Young]{Robert Young}
\address[Robert Young]{Courant Institute of Mathematical Sciences, New
York University}
\email{rjyoung1729@gmail.com}

\date{\today}



\begin{abstract} Our goal is to show, in two different contexts, that ``random" surfaces have large pants decompositions. First we show that there are hyperbolic surfaces of genus $g$ for which any pants decomposition requires curves of total length at least $g^{7/6 - \varepsilon}$. Moreover, we prove that this bound holds for most metrics in the moduli space of hyperbolic metrics equipped with the Weil-Petersson volume form. We then consider surfaces obtained by randomly gluing euclidean triangles (with unit side length) together and show that these surfaces have the same property.
\end{abstract}




%



\maketitle


Any surface of genus $g$, $g\ge 2$, can be decomposed into three-holed
spheres (colloquially, pairs of pants). We say that a surface has pants
length $\le l$ if it can be divided into pairs of pants by curves each of length
$\le l$. We say that a surface has total pants length $\le L$ if it can be
divided into pairs of pants by curves with the sum of the lengths 
$\le L$. The pants length and total pants length measure the size and complexity of
a surface. In particular, they describe how hard it is to divide the surface into
simpler parts. One of the main open problems in this area is to understand
how big the pants length of a genus $g$ hyperbolic surface can be. It
would also be interesting to understand how big the total pants length of
a genus $g$ hyperbolic surface can be.
In this paper, we use a random construction to find hyperbolic surfaces with
surprisingly large total pants length.

To put the paper in context, we review the known results about pants length
and total pants length. In \cite{BersDegen,BersIneq}, Bers proved that
for each genus $g$, the supremal pants length of a genus $g$ hyperbolic surface is finite.
This result is non-trivial because the moduli space of hyperbolic surfaces is
not compact, and Bers's result gives information about the geometry of surfaces
near the ends of moduli space. The supremal pants length of a genus $g$
hyperbolic surface is called the Bers constant, $\BB_g$. Later work
gave explicit estimates for the Bers constant. In \cite{BuserSep}, 
Buser and Sepp\"al\"a proved that every genus $g$ hyperbolic surface
has pants length at most $C g$ (where $C$ is a constant independent of $g$).
On the other hand, Buser \cite{BuserHab} gave examples of hyperbolic surfaces with pants
length at least $c g^{1/2}$ for arbitrarily large $g$. Buser conjectured that the Bers constant
of a hyperbolic surface is bounded by $C g^{1/2}$ \cite{BuserBook}. 

The total pants length has not been studied as much as the pants length, but
it also seems like a natural invariant. Since a pants decomposition has
$3g - 3$ curves in it, the estimate of Buser and Sepp\"al\"a implies that every
genus $g$ hyperbolic surface has total pants length at most $C g^2$. This is
the best known general upper bound.  In the other direction, it is easy to construct hyperbolic surfaces 
with total pants length at least $c g$ for every $g$ by taking covers of a genus
2 surface. The only previous non-trivial estimate comes from work of Buser and Sarnak on the geometry of certain arithmetic surfaces \cite{BuserSarnak}. They proved that there exist families of surfaces, one in each genus $g$, with the property that every topologically non-trivial curve has length at least $\sim \log g$. Since each curve in a pants decomposition is non-trivial, the total pants length of these arithmetic hyperbolic surfaces is at least $\sim g \log g$.

Now we have the background to state our main theorem.

\newtheorem{introth}{Theorem} 

\begin{introth} For any $\varepsilon > 0$, a ``random'' hyperbolic surface of genus $g$
has total pants length at least $g^{7/6 - \varepsilon}$ with probability tending to 1 as $g \rightarrow \infty$.
In particular, for all sufficiently large $g$, there are hyperbolic surfaces with total pants length at
least $g^{7/6 - \varepsilon}$.
\end{introth}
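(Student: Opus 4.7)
The plan is to fix a random model for hyperbolic surfaces, prove strong geometric properties of a random surface, and use these to force any pants decomposition to have large total length. A convenient random model is the Brooks--Makover construction: take a uniformly random trivalent graph on $N \sim g$ vertices, glue ideal hyperbolic triangles along its edges, and pass to the compact hyperbolic uniformization. For the Weil--Petersson statement one transfers via a comparison between this combinatorial measure and the Weil--Petersson volume form.

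For such a random surface $X$ I would establish, with probability tending to $1$ as $g \to \infty$, two properties:
\begin{enumerate}[(a)]
\item $\sys(X) \geq c_1 \log g$, by a first-moment estimate on short cycles in the random trivalent graph;
\item a polynomial bound on disjoint short simple closed geodesics: for every $\ell$ (in an appropriate range above the systole), the maximum number of pairwise disjoint simple closed geodesics on $X$ of length $\leq \ell$ is at most $g^{o(1)} \ell^{6}$.
\end{enumerate}

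Granting (b), the pants length bound follows by a short optimization. Let $\gamma_1, \ldots, \gamma_{3g-3}$ be any pants decomposition with $L = \sum_i \length(\gamma_i)$. The curves are pairwise disjoint, so for any threshold $\ell_0$, at most $g^{o(1)} \ell_0^{6}$ of the $\gamma_i$ have length $\leq \ell_0$ by (b); the remaining $3g - 3 - g^{o(1)} \ell_0^{6}$ curves each contribute at least $\ell_0$ to $L$. Choosing $\ell_0 \sim g^{1/6}$ balances the two terms and yields $L \geq g^{7/6 - \eps}$.

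The main obstacle is property (b). A uniform systole bound alone yields only $L \gtrsim g \log g$, recovering the Buser--Sarnak estimate, so one must control how many simple closed geodesics can be simultaneously disjoint at length scales considerably larger than the systole, where many such curves do exist. The proof of (b) would proceed by a first-moment argument on the combinatorial model: estimate the expected number of $k$-tuples of pairwise disjoint simple closed geodesics of length $\leq \ell$ (classified by their combinatorial embedding in the random trivalent graph) and turn this into a high-probability uniform bound over $\ell$. The exponent $6$ in (b) is precisely what produces the exponent $7/6$ in the conclusion, so getting it right is the most delicate part of the argument.
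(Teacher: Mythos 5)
The central step of your plan, property (b), is a genuine gap. As stated (uniformly in $\ell$ above the systole) it is false: already at $\ell\approx C\log g$ a random surface in the combinatorial model typically carries $g^{1-o(1)}$ pairwise disjoint simple closed geodesics of length $O(\log g)$ (the $\approx 2^{k}/k$ cycles of combinatorial length $k\approx\log_2 N$ in the random trivalent graph intersect each other so rarely that a greedy/Tur\'an selection leaves $N/(\log N)^2$ of them vertex-disjoint), while your bound would allow only $g^{o(1)}$. More importantly, the first-moment argument you sketch cannot produce a bound like $g^{o(1)}\ell^{6}$ at the scale you actually need, $\ell_0\approx g^{1/6-\delta}$: the expected number of closed geodesics of length $\le\ell$ grows like $e^{\ell}$, and two curves of length $g^{1/6}$ on a surface of area $\approx g$ are typically disjoint anyway, so pairwise disjointness barely reduces the expectation --- the expected number of $k$-tuples of pairwise disjoint such curves is superpolynomial in $g$ for every fixed $k$. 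The constraint only bites when nearly $3g$ curves must be disjoint simultaneously, i.e.\ for a full pants decomposition, and no pointwise statement of type (b) is known; the paper explicitly says that for any individual hyperbolic surface it cannot prove anything beyond the $g\log g$ bound. Two further problems: (a) is false in both models (the systole converges in distribution and does not tend to infinity; logarithmic systole is an arithmetic phenomenon, as in Buser--Sarnak), and the transfer from the Brooks--Makover model to the Weil--Petersson measure is itself not available --- the paper states that the comparison between the combinatorial counting measure and the WP volume is only an unproved analogy, and proves the combinatorial statement separately rather than by transfer.

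The paper's actual proof avoids pointwise geometry of the random surface entirely and is a global volume count. One may assume the pants curves are geodesics; a pants decomposition has a topological type given by a trivalent graph, and by Bollob\'as there are $\approx g^{g}$ types. For a fixed type, Wolpert's theorem says the WP volume form is $dl_1\wedge\cdots\wedge dl_{3g-3}\wedge d\tau_1\wedge\cdots\wedge d\tau_{3g-3}$ in the corresponding Fenchel--Nielsen coordinates, and the surfaces admitting such a decomposition of total length $\le L$ are covered by $\{\sum_i l_i\le L,\ 0\le\tau_i\le l_i\}$, whose volume is $\lesssim (L/g)^{6g}$ by AM--GM and the simplex volume. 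Hence $\Vol\MM_g(\le L)\lesssim g^{g}(L/g)^{6g}$, while Schumacher--Trapani give $\Vol\MM_g\approx g^{2g}$; for $L=g^{7/6-\varepsilon}$ the former is $\lesssim g^{(2-6\varepsilon)g}$, a vanishing fraction of the latter. So the exponent $7/6$ comes from the dimension $6g-6$ of moduli space together with the $g^{g}$ count of topological types and the $g^{2g}$ total volume, not from a bound on families of disjoint short curves; the exponent $6$ in your (b) is reverse-engineered from the desired conclusion, and nothing in your sketch produces it.
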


(To define a ``random'' hyperbolic surface we need a probability measure on the
moduli space of hyperbolic metrics. We use the renormalized Weil-Petersson volume
form. We discuss this notion of randomness more below.)

As another piece of context for our result, we mention the analogous questions for hyperbolic
surfaces with small genus but many cusps.
For simplicity, let's consider complete hyperbolic surfaces with genus 0 and $n$ cusps for
$n \ge 3$. These surfaces also have pants decompositions, and pants length is defined in the same way.
The arguments of Buser and Sepp\"al\"a show that such a surface has pants length at most $C n$, and Buser's
conjecture was that the pants length should be at most $C n^{1/2}$. Balacheff and Parlier proved this
conjecture in \cite{BalPar}.
In a more general context in \cite{bal-par-sab},
it is shown how to recuperate these results via Balacheff and Sabourau's diastolic inequality \cite{BalacheffSabourau}.
There are also very good estimates for the total pants length in this context.
Balacheff, Parlier and Sabourau showed that the total pants length of a hyperbolic surface with
genus 0 and $n$ cusps is at most $C n\log n$.  It's easy to find examples where the total pants length
is at least $c n$, so their bound is sharp up to logarithmic factors.  
The same authors went on to show that hyperelliptic genus $g$ hyperbolic surfaces have total pants length at most $\sim g \log g$. 
The hyperbolic surfaces in Theorem 1 are very different from hyperelliptic surfaces or from 
surfaces of genus 0 with many cusps.

Our lower bound is a lot stronger than the one coming from the Buser-Sarnak estimate. Instead of 
improving the trivial bound by a factor of $\log g$, we improve it by a polynomial factor $g^{1/6 - \varepsilon}$.
Let's take a moment to explain why it is difficult to prove such a lower bound.
Almost all the random hyperbolic surfaces we construct have diameter around $\log g$. Therefore, they have
lots of non-trivial curves with length around $\log g$. For example, we can make a basis for the first homology
of the surface using curves of length around $\log g$. So there are lots of short curves that look like good
candidates to include in a pants decomposition. The key issue seems to be that the curves in a pants
decomposition need to be disjoint. After we pick a first curve, the second curve needs to avoid it. This cuts down
our options for the second curve. Then
the third curve needs to avoid the first two curves, so we have even fewer options.
 If the pants length of the surface
is near $g^{7/6}$, then the curves ``get in each other's
way'' so much that we have to pick dramatically longer curves to finish building a pants
decomposition. This kind of effect looks difficult to prove because there are so many choices about how to choose
the early curves. At the present time, for any particular hyperbolic surface, we cannot prove that the total
pants length is any larger than $g \log g$. But for a random hyperbolic surface, the pants length is usually
much larger than $g \log g$. 

Our proof is essentially a counting argument. If a surface has total
pants length at most $L$, we can construct it by gluing some hyperbolic pairs of pants
with total boundary length $\le L$. A hyperbolic pair of pants is
determined by its boundary lengths, so the number (really,
Weil-Petersson volume) of possible surfaces with total pants length
$\le L$ is governed by the number of possible ways of choosing these
hyperbolic pairs of pants and gluing them together. We estimate this
volume and show that if $L\le g^{7/6-\varepsilon}$, then it is much
smaller than the total volume of moduli space.

To define the random hyperbolic surfaces above, we use the Weil-Petersson volume form on moduli space.
Moduli space and the Weil-Petersson volume form are both fairly abstract, making it particularly hard to 
visualize what is going on. To help make things more down-to-earth, we will also consider an
elementary random construction: randomly gluing together equilateral triangles to form a closed surface.
Suppose that we take $N$ Euclidean triangles of side length 1, where
$N$ is even. These triangles can be glued together to make a surface,
and Gamburd and Makover \cite{GamMak} showed that this surface usually has genus close to $(N+2)/4$. We consider the set of triangulated surfaces obtained in this way. We identify surfaces that
are simplicially isomorphic, and the equivalence classes form a kind of combinatorial ``moduli space''. We put the uniform measure on this finite set, so we can speak of
a random combinatorial surface. (This definition goes back to Brooks and Makover \cite{Brooks-Makover}, who
studied the first eigenvalue of the Laplacian and the systole of random combinatorial surfaces.) 

\begin{introth} For any $\varepsilon > 0$, a random combinatorial surface
 with $N$ triangles has total pants length
at least $N^{7/6 - \varepsilon}$ with probability tending to 1 as $N \rightarrow \infty$.
\end{introth}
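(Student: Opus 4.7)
The plan is to prove Theorem 2 by a counting argument in the spirit of the Weil--Petersson volume estimate that would underlie Theorem 1. Let $L := N^{7/6-\varepsilon}$; in the Brooks--Makover model $|\mathcal S_N| = (3N-1)!!$, so $\log|\mathcal S_N| = \tfrac{3N}{2}\log N + O(N)$. The goal is to show that the number $Z_L$ of surfaces in $\mathcal S_N$ admitting a pants decomposition of total flat length at most $L$ is a vanishing fraction of $|\mathcal S_N|$. As a preliminary step I would reduce to \emph{combinatorial} pants decompositions, whose curves are simple cycles in the $1$-skeleton of the triangulation: arcs inside an equilateral triangle have length bounded below, so a flat curve of length $\ell$ crosses $O(\ell)$ edges and can be straightened to an edge-loop of comparable length, with a small isotopy to restore disjointness across the family.

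Each pair (labelled surface $S$, combinatorial pants decomposition $\mathcal P$) is then encoded by five pieces of data: a trivalent multigraph $\Gamma$ on $k = 2g-2$ labelled vertices with labelled half-edges (the pants dual graph, with $|E(\Gamma)| = 3g-3 = 3k/2$); integer lengths $\ell_e \ge 1$ on the edges with $\sum_e \ell_e \le C_0 L$; twists $s_e \in \{0,\ldots,\ell_e-1\}$; an ordered partition $\{1,\ldots,N\} = T_1 \sqcup \cdots \sqcup T_k$ of labelled triangles into the $k$ pants; and a labelled triangulation of each pair of pants $P_v$ on the triangles of $T_v$ whose boundary-circle lengths match the incident $\ell_e$'s. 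Dividing by $k!$ removes the vertex-label redundancy.

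Stirling's formula yields, in logarithmic form, $\log(3k-1)!! = \tfrac{3N}{4}\log N + O(N)$; then $\log\binom{C_0 L + |E|}{2|E|} = \tfrac{3}{2}(\alpha-1)\,N\log N + O(N)$ when $L = N^\alpha > N$; then $\log\bigl(\binom{N}{n_1,\ldots,n_k}\prod_v n_v!\,T(n_v;\cdot)\bigr) \le \log(N!\cdot A^N) = N\log N + O(N)$ using the key exponential bound $T(n;\cdot) \le A^n\,\mathrm{poly}(n)$ for triangulated pants of fixed topology (Tutte-type map enumeration); and $\log k! = \tfrac{N}{2}\log N + O(N)$. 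Summing these gives
\[
\log Z_L \le \Bigl(\tfrac{3}{2}\alpha - \tfrac{1}{4}\Bigr) N\log N + O(N),
\]
which with $\alpha = 7/6 - \varepsilon$ becomes $\log Z_L \le \log|\mathcal S_N| - \tfrac{3\varepsilon}{2}N\log N + O(N)$, so $Z_L/|\mathcal S_N| \to 0$.

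The main obstacle will be the combinatorialization in the preliminary step: a flat pants curve may weave through many triangles in complicated ways, and straightening the whole decomposition into disjoint edge-loops without inflating the total length requires care. One alternative is to work throughout with normal-curve representatives and bound edge-crossing vectors in place of the discrete length--twist data. A secondary subtlety is the exponential bound on triangulated pairs of pants: this hinges on their fixed topology (genus $0$ with $3$ boundary components), which makes the enumeration generating function have positive radius of convergence; without this input the count would be factorial and the argument would fail. The exponent $7/6$ emerges precisely because the graph, length--twist, and $N!/k!$ contributions sum to $\log|\mathcal S_N|$ exactly at $\alpha = 7/6$.
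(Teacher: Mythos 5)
Your counting skeleton reproduces the paper's arithmetic exactly (dual trivalent graph $\approx g^g$, integer lengths and twists $\approx (L/g)^{6g}$, comparison with $\Card(\com{N})\approx N^{N/2}$, threshold at $7/6$), but the step you defer as a ``preliminary'' reduction is where essentially all of the difficulty lives, and as formulated it fails. You encode each pants piece as an honest triangulated surface of genus $0$ with three boundary circles, with the $N$ triangles partitioned among the pieces and boundary lengths matching the $\ell_e$. For such a piece every boundary edge lies on one of its triangles, so the total boundary length of all pieces is at most $3N$; your encoding therefore cannot represent any combinatorial pants decomposition whose total length is anywhere near $L=N^{7/6-\varepsilon}$, which is exactly the regime the theorem concerns. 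The reason such decompositions must nevertheless be counted is that minimal-length simplicial representatives of the (disjoint) flat pants curves are in general forced to share edges of the $1$-skeleton: disjoint simple edge-loops have total length at most $3N/2$, so no ``small isotopy'' can restore disjointness while keeping lengths comparable. The paper's reduction blows the surface up (edges to rectangles, vertices to polygons), performs surgeries on minimal representatives there, and collapses back; the resulting pieces are complexes consisting of clusters of triangles joined by \emph{stranded} edges that bound no triangle of the piece, not triangulated subsurfaces, and their boundary length can vastly exceed their number of triangles.

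This changes the enumeration problem, and your key input --- ``$T(n;\cdot)\le A^n\,\mathrm{poly}(n)$ by Tutte-type map enumeration for fixed topology'' --- is then not the right statement. For pants-like complexes with strands, the number with prescribed boundary lengths and $A$ triangles is \emph{not} exponential in $A$: already a theta-graph of total length $\ell$ with a single triangle attached can be built in $\sim\ell$ ways. Such polynomial-in-boundary-length factors, taken over the $2g-2$ pants, contribute an extra $(L/g)^{\Theta(g)}$ to the count and pull the provable exponent strictly below $7/6$ (to about $9/8$ even with one such cluster per pants, and worse in general). The paper's resolution is the notion of a \emph{tight} pants decomposition (no degree-two ``loose disk'' clusters), which requires two nontrivial lemmas you do not have: every combinatorial surface of genus $\ge 2$ admits a minimal-length tight decomposition (proved by a disk-sliding argument whose termination uses the genus hypothesis), and the number of tight pants with given boundary lengths and $A$ triangles is $\le e^{O(A)}$ uniformly in the boundary lengths (this is where Brown's enumeration of disk triangulations enters, not a bound on arbitrary triangulated pants). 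Without a substitute for these steps, your $Z_L$ does not dominate the number of surfaces admitting short pants decompositions, so the proposal has a genuine gap; and the normal-curve fallback you mention would need its own count, with $\sim N$ crossing coordinates replacing the $6g-6$ length--twist parameters, where it is not clear the exponent $7/6$ survives.
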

The proof of Theorem 2 is closely parallel to the proof of Theorem 1.
In some ways, the proof is not as clean, but on the other hand the
proof is completely elementary and self-contained. 

In a similar
spirit, Brooks and Makover suggested that for large $N$, the counting measure on the combinatorial moduli
space above may be close
to the Weil-Petersson measure on the moduli space of hyperbolic metrics. We're not sure how to
phrase this in a precise way, but there does seem to be a strong
analogy between combinatorial surfaces with $N$ triangles and
hyperbolic surfaces of genus $g$ when $N=g/4$. For example, there
are roughly $N^{N/2}\approx g^{2g}$ different surfaces with $N$ triangles,
comparable to the volume of moduli space, which is roughly $g^{2g}$.

The combinatorial metrics we study in Theorem 2 are not hyperbolic, so let us mention what is known
about the pants lengths of arbitrary metrics on a surface of genus $g$. Here the natural question is
how the pants length relates to the area. Generalizing his work with Sepp\"al\"a, 
Buser proved that any (not necessarily hyperbolic) genus $g$ surface with area $A$ has pants length at most 
$C g^{1/2} A^{1/2}$ \cite{BuserBook}. It's easy to give examples of surfaces with area $A$ and pants length at least
$C A^{1/2}$. Generalizing his conjecture for hyperbolic metrics, Buser conjectured that any metric
on a surface with area $A$ should have pants length at most $C A^{1/2}$. 
Recall that the systolic inequality for surfaces says that any surface of genus $g \ge 1$ and area $A$ has a non-trivial
curve of length at most $C A^{1/2}$. If it's true, Buser's conjecture about pants lengths would
greatly strengthen the systolic inequality. For an introduction to systolic geometry, see Chapter 4 of \cite{gromov}.

In both cases, hyperbolic and combinatorial, our results seem to be
based primarily on genus. Just as spheres with cusps have much
smaller pants length than high-genus hyperbolic surfaces with the same
area, small-genus random surfaces, e.g.\ random combinatorial surfaces
conditioned to have genus 0, might have simpler geometry than
high-genus random surfaces. 

The study of small-genus random surfaces is an active area in
probability theory especially in recent years, and it has been an
active area in physics for a long time. A lot of study has been paid
to random metrics on $S^2$, and one of the key ideas is that there is
essentially only one really natural random metric on $S^2$. Following
this idea, one conjectures that different procedures for producing a
random metric on $S^2$ should give the same result. In addition to
the example above, where one glues $N$ triangles together and rejects
any result which isn't a sphere, 
one can begin with the standard metric on $S^2$ and change it by a
randomly chosen conformal factor (the conformal factor being a
Gaussian free field). Conjecturally, for large $N$, the probability
distribution coming from gluing triangles converges to the probability
distribution for the random conformal factor. For an introduction to
these issues, see the survey article \cite{LeGall}.


There are many open questions related to the work in this paper. The
central problem, understanding the maximal pants length of a genus $g$
hyperbolic surface, remains wide open. We found random surfaces to be
useful in studying the total pants length, and there are also many
questions about pants length for random surfaces. For instance, what
is the average pants length of a random genus $g$ surface, and is the
average pants length close to the Bers constant? Do the pants lengths
of random genus $g$ surfaces concentrate near a single value? One
also has the analogous questions for total pants length.

In several ways, arithmetic surfaces have properties similar to random surfaces. How
does the pants length of an arithmetic surface compare with the pants length of
a random surface? In particular, is it true that an arithmetic surface has pants length
at least $g^{1/6 - \varepsilon}$?

Many of the same questions arise for random combinatorial surfaces
along with some new ones. Most random combinatorial surfaces have
large genus, but we can also consider the set of combinatorial
surfaces where the genus $g$ is fixed, but the number of triangles $N$
increases. If $g$ is small, say $g = 0$, and we look at large values
of $N$, then we get something quite similar to a ``random planar map''
studied in probability and physics - see \cite{LeGall}. If we take
$g$ large and $N$ to be roughly $4g$, it seems that we get some kind
of approximation to the moduli space of hyperbolic metrics. There are
many questions about the geometry of such a random surface -
what is its systole, first eigenvalue of Laplacian, or its Uryson
width? What are its isoperimetric properties? How many balls of
various radii are needed to cover it? What are its pants length and
total pants length? And so on.

\vskip5pt

In the first section of the paper, we review the topology of pants decompositions. In the second
section, we review some key background theorems about the Weil-Petersson volume form, and
we use them to prove Theorem 1. In the third section, we introduce the combinatorial moduli
space and prove Theorem 2. In the last section, we mention some open problems.\\

\noindent {\bf Notation:} Many of the numbers we will be concerned with are super-exponential. 
For two numbers $A(x)$ and $B(x)$ that depend on a variable $x$, $A(x)\approx B(x)$ (resp. $A(x) \gtrsim B(x) $) will
mean that they are equal (resp.\ the inequality holds) up to an
exponential factor in $x$. For example, by Stirling's inequality, $g! \approx g^g$.

\section{Topological types of pants decompositions}

Pants decompositions come in different topological types. 
Let us fix a surface $\Sigma$. A pants decomposition determines a
trivalent graph where each pair of pants corresponds to a vertex and two vertices
are joined by an edge if the corresponding pants share a boundary. 
(This trivalent graph may have multiple edges or loops.) We call this
graph the {\em topological type} of the pants decomposition.
We say that two pants decompositions are topologically equivalent
if their topological types are isomorphic graphs.
It's straightforward to check that if two pants decompositions are
topologically equivalent, then there is a diffeomorphism of $\Sigma$ taking
one to the other.

For example, if $\Sigma$ is a surface of genus 2, then it has two
topological types of pants decomposition. The two types each correspond
to trivalent graphs with two vertices. In the first case, there are three edges
that go between the two vertices. In the second case, each vertex has one
edge connecting it to the other vertex and one loop connecting it to
itself.

The first result that we need is an estimate for the number of different
topological types of pants decomposition on a surface of genus $g$.
In \cite{Bollobas}, Bollobas gave a precise asymptotic formula
for the number of trivalent graphs. We need only the following cruder version 
of Bollobas's formula.

\begin{lemma}\label{lem:countEquivs} (Bollobas)
 There are $\approx n^{n}$ trivalent graphs with $2n$ vertices.
\end{lemma}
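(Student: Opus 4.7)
The plan is to use the configuration model of Bollob\'as--Wormald to count trivalent graphs with labeled vertices, and then to pass to the unlabeled count. Attach three labeled half-edges to each of the $2n$ vertices, giving $6n$ half-edges in all. Each perfect matching of these $6n$ half-edges determines a labeled trivalent multi-graph, and conversely each such labeled multi-graph arises from exactly $(3!)^{2n}=6^{2n}$ matchings (by permuting the three half-edges at each vertex). Since the number of perfect matchings on $6n$ objects is $(6n)!/((3n)!\,2^{3n})$, the number of labeled trivalent multi-graphs on $2n$ labeled vertices equals
$$L \;=\; \frac{(6n)!}{(3n)!\,2^{3n}\,6^{2n}}.$$
A direct application of Stirling's formula gives $L\approx n^{3n}$ in the paper's notation (i.e.\ up to an exponential factor in $n$).

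To extract the unlabeled count $U$, I would appeal to the orbit--stabilizer identity
$$L \;=\; \sum_G \frac{(2n)!}{|\operatorname{Aut}(G)|},$$
summed over isomorphism classes $G$ of trivalent multi-graphs on $2n$ vertices. Since $(2n)!\approx n^{2n}$, the trivial bound $|\operatorname{Aut}(G)|\ge 1$ immediately yields the lower bound
$$U \;\ge\; \frac{L}{(2n)!} \;\approx\; \frac{n^{3n}}{n^{2n}} \;=\; n^n.$$

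The matching upper bound $U\lesssim n^n$ is the main difficulty. A purely worst-case estimate on $|\operatorname{Aut}(G)|$ is far too weak: for example, taking $n$ disjoint copies of the two-vertex triple-edge multi-graph produces a single $G$ with $|\operatorname{Aut}(G)|\approx n!$, which is super-exponential. What rescues the bound is that such highly symmetric graphs are rare, so their aggregate contribution does not inflate $U$ beyond the exponential slack permitted by $\approx$. More precisely, it is part of Bollob\'as's work that the average of $|\operatorname{Aut}(G)|$ over trivalent graphs is only $e^{O(n)}$; feeding this into the orbit--stabilizer identity gives $U\lesssim e^{O(n)}\cdot L/(2n)! \approx n^n$. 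Since the lemma only asserts the crude form $\approx n^n$, one may cite Bollob\'as's asymptotic as a black box for this final automorphism-control step, which is where the real content lies.
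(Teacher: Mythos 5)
Your setup --- the configuration model, the count $L\approx n^{3n}$ of labelled trivalent multigraphs, and the orbit--stabilizer lower bound $U\gtrsim L/(2n)!\approx n^n$ --- is exactly the skeleton of the paper's proof. The gap is in the upper bound, which you outsource to a black box. The statement you propose to cite, that the average of $|\operatorname{Aut}(G)|$ over trivalent graphs is $e^{O(n)}$, is not what Bollob\'as's paper provides: his asymptotics concern unlabelled regular \emph{simple} graphs, while the graphs relevant here (dual graphs of pants decompositions) are multigraphs with loops and multiple edges, and the paper supplies its own proof of the lemma precisely because it wants a crude, self-contained bound in that setting. So as written, the step you yourself call ``where the real content lies'' is asserted rather than proved.

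Moreover, that step has an elementary proof which you declared impossible. For a \emph{connected} trivalent graph $G$ on $2n$ vertices --- and connectedness is automatic for graphs arising from pants decompositions of a closed surface, which is the case the paper needs --- every automorphism is determined by the image of a fixed basepoint together with a permutation of the three neighbours at each vertex, since this data propagates along the graph. Hence $1\le |\operatorname{Aut}(G)|\le 2n\cdot 6^{2n}$, an exponential worst-case bound; every $S_{2n}$-orbit of labelled graphs therefore has $\approx n^{2n}$ elements, and the number of orbits is $\lesssim n^{3n}/n^{2n}=n^n$. Your example of $n$ disjoint theta graphs, with automorphism group of order about $n!\,2^n$, only shows that the worst-case bound fails for \emph{disconnected} graphs (and indeed the paper's stabilizer estimate implicitly assumes connectedness). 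Even if you insist on counting disconnected graphs, no average-automorphism input is needed: such a graph is a multiset of connected trivalent graphs on $2n_1,\dots,2n_k$ vertices, the connected counts multiply to at most $\prod_i n_i^{n_i}\le n^n$, and the number of partitions of $2n$ grows only exponentially, so the total is still $\lesssim n^n$. Replacing your citation by this worst-case argument closes the gap and recovers the paper's proof.
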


This lemma is cruder than Bollobas's result, and it
also has a simpler proof. For reference, we include a proof here.

\begin{proof}
 We start with $2n$ labeled tripods and
 consider all the ways to glue them together to produce a trivalent
 graph. The tripods have total degree $6n$, and there are
 $$\frac{(6n)!}{(3n)!2^{3n}} \approx n^{3n}$$ 
 ways of dividing the $6n$ half-edges into pairs, each of which corresponds to a
 trivalent graph with vertices numbered $v_1, \dots, v_{2n}$. An
 unlabeled trivalent graph occurs many times in this collection. The
 permutation group $S_{2n}$ acts on the set of labeled graphs by
 permuting the labels, and each orbit of the permutation group
 consists of isomorphic graphs. The number of equivalence classes is
 thus at most the number of orbits. If $G$ is a labeled graph,
 recall that its orbit has $\Card S_{2n}/\Card S_{2n}^G$ elements, where
 $S_{2n}^G$ is the stabilizer of $G$. The stabilizer of $G$
 consists of permutations of the vertices which lead to an isomorphic
 graph. We can describe such a permutation in terms of the image of
 a basepoint in $G$ and a permutation of the neighbors of each
 vertex, so
 $$1\le \Card S_{2n}^G\le 2n 6^{2n} \lesssim 1.$$
 Hence each orbit has $\approx n^{2n}$ elements, and the number of
 orbits is $\approx n^{n}$.
\end{proof}

Consequently, a pants decomposition of a genus $g$ surface has one of
$\approx g^{g}$ possible topological types.

\section{The moduli space of hyperbolic metrics}

In this section we show that a random hyperbolic metric on a genus $g$ surface
has total pants length at least roughly $g^{7/6}$ with very high probability.
To begin, let us define what we mean by a random metric and make a precise
statement.

We denote the moduli space of closed hyperbolic surfaces of genus $g$ by $\MM_g$.
The Weil-Petersson metric is a Riemannian metric on $\MM_g$. We use the volume
form of the Weil-Petersson metric to define volumes on moduli space. By renormalizing
the Weil-Petersson volume form, we get a probability measure on moduli space.
We take random metrics with respect to this probability measure.

\begin{theorem} For any $\varepsilon > 0$, a random metric in $\MM_g$ has total pants
length at least $g^{\frac{7}{6} - \varepsilon}$ with high probability: the probability tends to
1 as $g \rightarrow \infty$.
\end{theorem}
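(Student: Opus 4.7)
The plan is a volume counting argument, following the heuristic sketched in the introduction. Write $\MM_g(L) \subset \MM_g$ for the set of hyperbolic surfaces admitting a pants decomposition of total length at most $L$. The goal is to bound the Weil--Petersson volume of $\MM_g(L)$ and compare it with the total volume $\mathrm{vol}_{WP}(\MM_g) \approx g^{2g}$ (due to Mirzakhani--Zograf; only a matching lower bound is needed). If one can show $\mathrm{vol}_{WP}(\MM_g(L)) \lesssim L^{6g}/g^{5g}$, then the WP-probability of $\MM_g(L)$ is $\lesssim L^{6g}/g^{7g}$, which at $L = g^{7/6 - \varepsilon}$ equals $g^{-6\varepsilon g}$ and tends to $0$ super-exponentially fast.

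The first step is to partition $\MM_g(L) = \bigcup_{[P]} \MM_g(L,[P])$ according to the topological type $[P]$ of the short pants decomposition. By Lemma \ref{lem:countEquivs} there are $\approx g^{g}$ such types, so it suffices to bound $\mathrm{vol}_{WP}(\MM_g(L,[P]))$ uniformly in $[P]$. Fix a model $P$ on a reference surface $\Sigma_g$ representing $[P]$ and work in the Fenchel--Nielsen coordinates $(\ell_i,\tau_i)_{i=1}^{3g-3}$ on Teichm\"uller space $\T_g$ adapted to $P$. Wolpert's theorem gives the WP volume form as $\prod_i d\ell_i\, d\tau_i$. Since the Dehn twists along the curves of $P$ preserve the $\ell_i$ and act by integer translation on the $\tau_i$, restricting to $\tau_i \in [0,\ell_i)$ produces a surjection onto $\MM_g(L,[P])$, so
\[
\mathrm{vol}_{WP}(\MM_g(L,[P])) \;\le\; \int_{\ell_i > 0,\ \sum_i \ell_i \le L} \prod_{i=1}^{3g-3} \ell_i\, d\ell_i \;=\; \frac{L^{6g-6}}{(6g-6)!} \;\approx\; \frac{L^{6g}}{g^{6g}},
\]
where the Dirichlet integral is evaluated in closed form and Stirling converts the factorial. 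Summing over the $\approx g^{g}$ topological types yields the claimed bound.

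The main thing to handle carefully is the mapping class group bookkeeping: other elements of the MCG may identify additional regions of $\T_g$ in the quotient, but this can only shrink the image in $\MM_g$, which is harmless for an upper bound; what remains to check is that the strip $\{\tau_i \in [0,\ell_i),\ \sum \ell_i \le L\}$ really surjects onto $\MM_g(L,[P])$ under $\T_g \to \MM_g$, which is immediate from the definition of ``admits a pants decomposition of type $[P]$ of total length $\le L$''. The only nontrivial analytic input is the asymptotic lower bound $\mathrm{vol}_{WP}(\MM_g) \gtrsim g^{2g}$; with this in hand, the exponent $\tfrac{7}{6}$ emerges exactly from balancing $g^{g} \cdot L^{6g}/g^{6g}$ against $g^{2g}$, so the threshold at $g^{7/6}$ is simply the crossover between the ``number of topological types times Dirichlet integral'' estimate and the total WP volume.
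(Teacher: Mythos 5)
Your proposal is correct and follows essentially the same route as the paper: partition by the $\approx g^g$ topological types of pants decomposition, bound the Weil--Petersson volume of each piece via Wolpert's formula in Fenchel--Nielsen coordinates (your exact Dirichlet integral $L^{6g-6}/(6g-6)!$ gives the same $\lesssim (L/g)^{6g}$ that the paper obtains by AM--GM plus the simplex volume), and compare with $\Vol\MM_g \approx g^{2g}$. The only cosmetic difference is the attribution of the volume asymptotics (the paper cites Schumacher--Trapani, which suffices since only the crude $\approx g^{2g}$ bound is needed).
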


Let us indicate the plan of our proof. 

First, we observe that without loss of generality, we may assume that the curves in
the pants decomposition are closed geodesics. Suppose we begin with a pants decomposition
of a hyperbolic surface $\Sigma$. By definition, the pants decomposition consists
of disjoint embedded curves $\gamma_1, ..., \gamma_{3g-3}$ so that each component
of the complement is a three-holed sphere. This is equivalent to just requiring that
the curves $\gamma_i$ are disjoint and non-parallel: no two curves among them
bound an annulus. Following a standard trick, we `tighten' the curves $\gamma_i$:
we replace each curve $\gamma_i$ with a closed geodesic $\bar \gamma_i$ in its free homotopy class. By
standard arguments in hyperbolic geometry, the closed geodesics will be embedded and disjoint,
and no two of them will bound an annulus. Hence the curves $\bar \gamma_i$ give a 
pants decomposition of $\Sigma$ also, and they have smaller lengths than the curves
$\gamma_i$.

From now on we assume that the curves in the pants decomposition are closed
geodesics. So each three-holed sphere in the pants decomposition has a hyperbolic
metric with geodesic boundary. We call such a pair of pants with such a metric
a {\it hyperbolic pair of pants}. The classification of hyperbolic pairs of pants is easy to descibe: 
the boundary curves may have any positive lengths, and for each
choice of lengths there is a unique metric.

Now our plan consists of describing all the ways to glue together hyperbolic pairs of pants
to make a closed hyperbolic surface, and then estimating the volume in moduli space
covered by these surfaces. 


To prove our theorem, we use two fundamental facts about the
Weil-Petersson volume form. Recall that the Teichm\"uller space $\T_g$
denotes the space of hyperbolic metrics on a fixed surface of genus
$g$, where two metrics are equivalent if they are related by an
isometry {\it isotopic to the identity}. The moduli space $\MM_g$ is
the quotient of Teichm\"uller space $\T_g$ by the action of the
mapping class group. The Weil-Petersson metric on Teichm\"uller space
is a non-complete K\"ahler metric with negative sectional curvature,
and is very much related to the hyperbolic geometry of surfaces.
Although it is a very natural metric to consider, it is quite
technical to define, so we refer the reader to \cite{Wolpert} for
details. The Weil-Petersson metric is invariant under the action of
the mapping class group, so it descends to a metric on moduli space.

\newtheorem{bthm}{Background Theorem}

We will need the following result of Schumacher and Trapani \cite{SchTra}.

\begin{bthm}[Asymptotic volume growth]\label{thm:volumeWP} The volume
 of moduli space $\MM_g$ grows (up to an exponential factor) like
 $g^{2g}$; i.e.,
 $$\Vol \MM_g\approx g^{2g}.$$
\end{bthm}

This result was an improvement of previous lower \cite{Penner} and upper bounds \cite{Grush}.

The second background theorem expresses the Weil-Petersson volume form
in a set of Fenchel-Nielsen coordinates. Before stating the result, we
quickly recall Fenchel-Nielsen coordinates.

Fix a pants decomposition of the genus-$g$ surface. We denote the
curves in the pants decomposition by $\gamma_1, ..., \gamma_{3g-3}$.
Recall that $l_i$ and $\tau_i$, the length and twist parameters,
define coordinates on the Teichm\"uller space $\T_g$. The length
parameter $l_i$ measures the length of the shortest curve homotopic to
$\gamma_i$ in the given metric; this is a positive real number. The
twist parameter $\tau_i$ measures the twist in the gluing across this
geodesic; this is a real number measured in units of length.
Different length and twist parameters may correspond to the same point
in $\MM_g$; for instance, replacing $\tau_i$ by $\tau_i\pm l_i$
yields a metric isometric to the original one by a Dehn twist around
$\gamma_i$.

The volume form for the Weil-Petersson metric has a very simple form in terms of these coordinates \cite{Wolpert}.

\begin{bthm}[Wolpert] In Fenchel-Nielsen coordinates, the volume form of the Weil-Petersson metric
is simply the standard volume form $dl_1 \wedge ... \wedge d l_{3g - 3} \wedge d \tau_1 \wedge ... \wedge
d \tau_{3g-3}$.
\end{bthm}

The region in moduli space with total pants length less than $L$ {\it in our fixed pants decomposition} is covered
by the following region of Teichm\"uller space:

$$S=\{ (l_i, \tau_i) \in T_g | \sum_i l_i \le L, 0 \le \tau_i \le l_i \} $$

The Weil-Petersson volume of this set is the same as its volume in the standard Euclidean metric on $\mathbb{R}^{6g-6}$ and it is not hard to estimate.

\begin{lemma} 
 If $1 \le L \le exp(g)$, then 
 $\Vol S \lesssim (L/g)^{6g}$, where $\lesssim$ is taken with respect
 to $g$.
\end{lemma}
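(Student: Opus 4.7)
The plan is to compute $\Vol S$ exactly as a Dirichlet-type integral and then apply Stirling. The region $S$ is a product-like subset of Fenchel-Nielsen coordinates, and by Wolpert's theorem (Background Theorem 2) the Weil-Petersson volume form on it is just the Euclidean form, so the computation reduces to a classical weighted simplex integral.

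First I would integrate out the twist parameters. In $S$ each $\tau_i$ ranges freely over an interval of length $l_i$, contributing a factor of $l_i$, so
$$\Vol S \;=\; \int_{\substack{l_i \ge 0 \\ l_1 + \cdots + l_{3g-3} \le L}} l_1 l_2 \cdots l_{3g-3} \; dl_1 \cdots dl_{3g-3}.$$
Next, I would rescale by $l_i = L x_i$; this pulls out a factor $L^{2(3g-3)} = L^{6g-6}$ and reduces the problem to the Dirichlet integral $\int_\Delta x_1 \cdots x_{3g-3}\, dx$ over the standard simplex, which by the classical Dirichlet formula (with $a_i=2$ for all $i$) equals $1/(6g-6)!$. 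Thus
$$\Vol S \;=\; \frac{L^{6g-6}}{(6g-6)!}.$$

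Finally, Stirling's formula gives $(6g-6)! \approx g^{6g}$ up to factors exponential in $g$, which is precisely what the symbols $\approx$ and $\lesssim$ are allowed to absorb in the notation of this paper. Hence $\Vol S \approx L^{6g-6}/g^{6g}$. Because $L \ge 1$ we have $L^{6g-6} \le L^{6g}$, which yields $\Vol S \lesssim (L/g)^{6g}$ as required; the upper bound $L \le \exp(g)$ is only there to ensure that any stray polynomial-in-$L$ slack stays safely within the exponential-in-$g$ tolerance of $\lesssim$, but for this particular inequality the lower bound $L \ge 1$ is all that is strictly used.

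The main ``obstacle'' here is really just bookkeeping: one has to recognize that the twist integration produces exactly the right weight to make the length integral a standard Dirichlet integral, and that Stirling turns $(6g-6)!$ into $g^{6g}$ modulo the exponential slack allowed by the paper's notation. There is no substantive geometric difficulty in this lemma -- the geometric content has already been packaged into Wolpert's formula, and what remains is a one-line calculus exercise.
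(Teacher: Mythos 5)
Your proof is correct and follows essentially the same route as the paper: both use Fubini to reduce $\Vol S$ to the integral of $\prod_i l_i$ over the simplex $\sum_i l_i \le L$. The only difference is that you evaluate that integral exactly as a Dirichlet integral, obtaining $L^{6g-6}/(6g-6)!$, whereas the paper bounds the integrand by AM--GM and multiplies by the simplex volume; both give $(L/g)^{6g}$ up to factors exponential in $g$, and your remark that only $L \ge 1$ is needed for the upper bound (the hypothesis $L \le \exp(g)$ mattering only for the matching lower bound) is accurate.
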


\begin{proof} First consider the $(3g-3)$-dimensional simplex defined by the inequalities $0 < l_i$, $\sum_i l_i \le L$.
We denote this simplex by $\Delta_L$. By Fubini's theorem, 
$$\Vol S= \int_{\Delta_L} \prod_{i=1}^{3g-3} l_i . $$
By the arithmetic-geometric mean inequality, 
$$\prod l_i \le (\frac{L}{3g-3} )^{3g-3} \lesssim (\frac{L}{g})^{3g}.$$
Hence $\Vol S\lesssim \Vol (\Delta_L) (L/3g)^{3g}$.

The volume of the simplex $\Delta_L$ may be calculated inductively using the formula for the volume of
a pyramid. It is equal to $\frac{L^{3g-3}}{(3g-3)!} \lesssim (L/g)^{3g}$. \end{proof}

\noindent {\bf Remark.} The calculation in this lemma is basically sharp: the region of Teichm\"uller space above has
volume $\approx (L/g)^{6g}$. The region of moduli space covered by this region of Teichm\"uller space
has volume $\lesssim (L/g)^{6g}$. (The volume in moduli space may be much smaller if the covering map
is highly non-injective. We don't know how to estimate this effect.)

Now let $\MM_g(\le L) \subset \MM_g$ denote the subset of hyperbolic metrics that admit pants decompositions of total
length $\le L$.

If $E$ denotes a topological type of pants decomposition, we let $\MM_g(\le L, E) \subset \MM_g(\le L)$
denote the the subset of hyperbolic metrics that admit a pants decomposition of type $E$ and total length
$\le L$. For each $E$, the calculation in Fenchel-Nielsen coordinates shows that the volume of $\MM_g(\le L, E)$
is $\lesssim (L/g)^{6g}$. There are $\approx g^g$ different topological types $E$. Every pants decomposition belongs
to one of these $\approx g^g$ types, and so the volume of $\MM_g(\le L)$ is $\lesssim L^{6g} g^{-5g}$.

As we saw above, the volume of $\MM_g$ is $\approx g^{2g}$. If we set $L = g^{\frac{7}{6} - \varepsilon}$, then we see
that the volume of $\MM_g(\le L) \lesssim g^{(2 - 6 \varepsilon) g}$. Recalling the definition of $\lesssim$ and $\gtrsim$,
we see that the volume of $\MM_g$ is at least $c e^{-cg} g^{2g}$, while the volume of $\MM_g(\le g^{(7/6) - \varepsilon})$
is at most $C e^{Cg} g^{(2 - 6 \varepsilon) g}$. So for $g$ sufficiently large, the volume of $\MM_g$ is much
larger than the volume of $\MM_g(\le g^{(7/6) - \varepsilon})$. This
proves Theorem 1.

\section{The combinatorial viewpoint}

If $N$ is even, one can construct a oriented surface by gluing
together $N$ triangles (we allow edges of the same triangle to be
glued together and allow edges to form loops). We call the
corresponding CW-complex a combinatorial surface. We declare two
combinatorial surfaces to be equivalent if there is a homeomorphism
which sends edges to edges and faces to faces and we define $\com{N}$
to be the set of equivalence classes of such surfaces with $N$
triangles. Gamburd and Makover \cite{GamMak} showed that as $N\to
\infty$, a random element of $\com{N}$ has genus at least
$(1/4-\varepsilon)N$ with high probability, so $\com{N}$ is a rough
combinatorial equivalent of $\MM_{N/4}$.

We think of each triangle in a combinatorial surface as a Euclidean equilateral
triangle with side length 1. In this way, each combinatorial surface in $\com{N}$ has a metric on it.
In particular, we can define its pants length and total
pants length. We also have a good notion
of a random combinatorial surface given by the counting measure
on $\com{N}$. Using these definitions, we get the following combinatorial version
of our main result.

\begin{theorem}
For any $\varepsilon > 0$, a random combinatorial surface in $\com{N}$ has total pants length
at least $N^{7/6 - \varepsilon}$ with probability tending to 1 as $N \rightarrow \infty$.
\end{theorem}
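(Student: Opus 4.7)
The proof parallels that of Theorem 1, with combinatorial analogs of the Fenchel-Nielsen coordinates and the volume estimate. The plan is to count the combinatorial surfaces in $\com{N}$ admitting a pants decomposition of total length at most $L$, and show that this count is much smaller than $|\com{N}| \approx N^{N/2}$ when $L = N^{7/6-\varepsilon}$. Since by Gamburd and Makover a typical surface in $\com{N}$ has genus $g \sim N/4$, the hyperbolic quantities $g^g$ and $g^{2g}$ become $\approx N^{N/4}$ and $\approx N^{N/2}$ in this setting.

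First, I would replace the pants decomposition curves by combinatorial representatives. Given disjoint simple closed curves $\gamma_1, \dots, \gamma_{3g-3}$ forming a pants decomposition, isotope each $\gamma_i$ into a canonical normal form relative to the triangulation (for example, by pushing it onto the 1-skeleton), so that each acquires a well-defined integer length $l_i$. A standard pushing argument bounds the resulting increase in total length by a multiplicative constant, which can be absorbed into $\varepsilon$. One must check that this process preserves the pants decomposition property (disjointness and non-parallelism), which should follow from a standard topological argument. After this reduction, Lemma \ref{lem:countEquivs} gives $\approx g^g \approx N^{N/4}$ topological types of pants decomposition.

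Fix such a topological type $E$. A combinatorial surface with pants decomposition of type $E$ and total length at most $L$ is then specified by discrete ``Fenchel-Nielsen'' data: integer curve lengths $(l_i)$ with $\sum_i l_i \le L$; for each vertex $v$ of $E$, a triangulated pair of pants $P_v$ with $n_v$ triangles and the prescribed boundary lengths, where $\sum_v n_v = N$; and an integer twist $\tau_i \in \{0, \dots, l_i - 1\}$ for each curve. The contribution of lengths and twists is estimated exactly as in the proof of Theorem 1: summing $\prod_i l_i$ over tuples with $\sum_i l_i \le L$ yields $\lesssim (L/g)^{6g}$ by the simplex computation. The new ingredient is bounding the number of triangulated pants; I would invoke a Tutte--Bender-type enumeration result to the effect that the number of triangulated pairs of pants with $n$ triangles and any fixed boundary lengths grows only exponentially in $n$, so that $\prod_v T(n_v, a_v, b_v, c_v) \lesssim C^N$ for an absolute constant $C$, a factor absorbed by $\approx$.

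Combining the above, the count per topological type is $\lesssim (L/g)^{6g}$, and summing over the $\approx g^g$ types gives $\approx N^{N/4}(L/N)^{3N/2}$ for $L = N^{7/6-\varepsilon}$. The ratio to $|\com{N}| \approx N^{N/2}$ is then $\approx N^{-3\varepsilon N/2}$, which tends to $0$ as $N \to \infty$, proving Theorem 2. The main obstacle I expect is the triangulated-pants enumeration: the exponential bound $T(n,a,b,c) \lesssim C^n$ is essential, since any super-exponential bound of the form $n^{cn}$ with $c > 0$ would fail for all sufficiently small $\varepsilon$. A secondary technical point is making the combinatorial normal-form reduction precise while preserving disjointness and controlling the total length up to a multiplicative constant.
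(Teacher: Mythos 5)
Your overall counting scheme (topological types $\approx g^g$, integer lengths and twists contributing $\lesssim (L/g)^{6g}$ via the simplex/AM--GM computation, an exponential-in-area factor for the pants themselves, then comparison with $\Card(\com{N})\approx N^{N/2}$) is exactly the paper's skeleton, and your arithmetic at the end is right. But the step you yourself flag as the main obstacle is a genuine gap, and it cannot be closed in the form you propose. The objects you get by cutting a triangulated surface along simplicial curves of controlled length are not honest triangulated surfaces with boundary: minimal-length simplicial representatives of the pants curves may share edges and vertices, and after the necessary surgeries the resulting ``pairs of pants'' contain stranded edges not bordering any triangle (infinitely thin pieces). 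For such objects the enumeration claim ``$T(n,a,b,c)\lesssim C^n$ uniformly in the boundary lengths'' is simply false: already a pair of pants consisting of a theta-graph of total boundary length $l_1+l_2+l_3$ with one triangle attached admits $\approx l_1+l_2+l_3$ distinct configurations, and with many separate triangles strung along the strands the count grows like a power of the boundary length per triangle, i.e.\ up to $L^{\Theta(N)}$ in aggregate, which swamps $N^{N/2}$ for every exponent larger than $1$. If instead you insist that the pants be genuine triangulated surfaces with boundary (so that a Brown/Tutte--Bender-type exponential bound does hold, as in the paper's count of ${\rm Com}_{n}{\rm Mod}_{g,k}$), then the reduction step fails: it is not true that every triangulated surface with a short geometric pants decomposition can be cut along disjoint embedded simplicial curves, of comparable total length, into such pants --- the thin pieces are unavoidable, which is precisely why the paper allows them.

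The missing idea is the paper's notion of a \emph{tight} pants decomposition. One first shows (via a subdivision into vertex polygons and edge rectangles, plus surgeries on minimal-length curves) that any pants decomposition yields a minimal combinatorial one with only a factor-$2$ length loss; then one shows, by a nontrivial sliding argument whose termination uses the genus, that loose disks can be eliminated, so every surface admits a minimal-length \emph{tight} pants decomposition; and finally one proves that tight pants with prescribed boundary lengths and $A$ triangles number at most $c_0e^{c_0A}$, with constant independent of the boundary lengths --- tightness forces the triangles into at most two clusters, so the boundary-length-dependent freedom of where to attach triangles disappears. Without this (or some equivalent rigidification of the combinatorial pants), your count per topological type is not $\lesssim(L/g)^{6g}e^{cN}$, and the proof does not go through. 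Your treatment of disjointness of the simplicial representatives (``a standard topological argument'') is a smaller but related gap: it is exactly the place where the thin pieces enter.
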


The proof of Theorem 2 is morally analogous to the proof of Theorem 1. It is more elementary,
because it does not rely on the Weil-Petersson metric, but there
are also some additional subtleties. 

One of the key observations in the proof of Theorem 1 was
that a hyperbolic surface of total pants length $L$ can be cut into {\it hyperbolic
pairs of pants} whose boundaries have total length at most $2L$. The main
subtlety in the proof of Theorem 2 is to find the right combinatorial analogue for this step.

Suppose we start with an arbitrary pants decomposition of a
combinatorial surface $\Sigma$. The curves $\gamma_i$ are arbitrary
curves, and so the pairs of pants in the decomposition do not have
combinatorial structures. To get a combinatorial decomposition, we
need the $\gamma_i$ to be combinatorial curves. For each $i$, we can
approximate $\gamma_i$ by a combinatorial curve $\bar \gamma_i$ which
is homotopic to $\gamma_i$ and has comparable length. At this point,
some problems appear: the curves $\bar \gamma_i$ need not be embedded
and need not be disjoint. If the $\bar{\gamma_i}$ are chosen
carefully, however, we can still express $\Sigma$ as the union of
combinatorial pairs of pants, glued along the $\bar{\gamma_i}$. A
combinatorial pair of pants will consist of triangles and edges, but
some of the edges may not border any of the triangles. These isolated
edges can be thought of as `infinitely thin' pieces of surface. We
will show that we can choose the $\bar \gamma_i$ to be combinatorial
geodesics, so that each one has minimal length compared to all
combinatorial curves in its homotopy class.

At this point, there is a further problem. A hyperbolic pair of pants is determined by
the lengths of its boundary curves. But there are many different combinatorial pairs of pants
with the same boundary lengths. When we count how many ways we can glue together
combinatorial pairs of pants with given boundary lengths, we get a large unwanted factor
coming from the different choices for a pair of pants with fixed boundary curves. The underlying
cause of this problem is that combinatorial geodesics --- unlike
hyperbolic ones --- are not unique. The solution to this problem is to consider
only special combinatorial pants decompositions which we call `tight pants decompositions'. 
We define these below. Roughly speaking, they are pants decompositions of minimal complexity
in an appropriate sense.

With this well-chosen definition, the analogy runs smoothly. In
Section 3.1, we prove that $\com{N}$ has
cardinality $\approx N^{N/2}$, but that for any fixed $g$, the number
of $N$-triangle combinatorial surfaces of genus $g$ grows only exponentially. In Section 3.2, we introduce combinatorial pants decompositions
and tight combinatorial pants decompositions. We show that any pants decomposition can be
improved to make a tight combinatorial pants decomposition. In Section 3.3, we count the
number of tight combinatorial pants decompositions of total length $\le L$.

\subsection{Counting combinatorial surfaces}\label{ss:moduli}
The goal of this part is to count surfaces that lie in our
combinatorial moduli space. Our main goal is to prove that the cardinality
of $\com{N}$ is $\approx N^{N/2}$. Since most surfaces in $\com{N}$ have
genus close to $N/4$, this is analogous to the fact that
the Weil-Petersson volume of moduli space of surfaces of genus $g$ is
$\approx g^{2g}$.

Over the course of our argument, we will need to consider
the set of combinatorial surfaces with a particular genus. Let $\commodgk$ denote all combinatorial surfaces of
genus $g$ with $k$ boundary components made from $N$ triangles.
Again, we will allow two edges
of the same triangle to be glued together, and again we consider
surfaces up to homeomorphisms preserving edges and faces.

For fixed $g$ and $k$, we will see that the cardinality of $\commodgk$ grows only exponentially with $N$.

We begin by studying the cardinality of $\com{N}$. 

\begin{lemma}[Combinatorial volume growth]
 $$\Card(\com{N}) \approx N^{N/2}.$$
\end{lemma}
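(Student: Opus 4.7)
The plan is to follow the strategy of Lemma~\ref{lem:countEquivs}: count labeled gluings of $N$ triangles, then quotient by the relabeling symmetries. First, I would label the triangles $1, 2, \ldots, N$ and cyclically order the three edges of each triangle. Because the surface is required to be oriented, specifying a combinatorial surface on these labeled pieces is equivalent to choosing a perfect matching of the $3N$ resulting half-edges: the identification of each matched pair is then forced by the global orientation. The number of such matchings is the double factorial
$$(3N-1)!! = \frac{(3N)!}{(3N/2)! \, 2^{3N/2}} \approx N^{3N/2}$$
by Stirling, which uses the standing assumption that $N$ is even.

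Two labeled gluings are equivalent in $\com{N}$ if and only if they lie in the same orbit of the group $G := \Z/3 \wr S_N$ of order $3^N N! \approx N^N$, which relabels the triangles and cyclically rotates their edge labels. By orbit--stabilizer, the count of orbits is
$$\Card(\com{N}) = \frac{1}{|G|}\sum_{\text{labeled gluings}} |\mathrm{Stab}|,$$
so the desired estimate $\Card(\com{N}) \approx N^{3N/2}/N! \approx N^{N/2}$ follows once the stabilizer of a typical labeled gluing is shown to be $\lesssim 1$ in the exponential sense of $\approx$. Concretely, an upper bound on the maximal stabilizer yields the upper bound on $\Card(\com{N})$, and a matching bound on the average (or on enough individual) stabilizers yields the lower bound.

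The main obstacle I anticipate is controlling this stabilizer uniformly. For a \emph{connected} labeled gluing, the stabilizer coincides with the orientation-preserving simplicial automorphism group of the underlying unlabeled surface, and such an automorphism is determined by the image of a single oriented edge; since a surface with $N$ triangles has only $3N$ oriented edges, the stabilizer has size at most $3N$, which is $\lesssim 1$. The delicate case is disconnected gluings: a surface with $k$ isomorphic components contributes a stabilizer of size $\gtrsim k!$ from permuting those components. I would dispatch this by stratifying $\com{N}$ by the multiset of connected-component sizes and showing that the total contribution of disconnected gluings is dominated by an exponential factor times the connected contribution (for instance by noting that uniformly random matchings of the $3N$ half-edges are connected with probability bounded away from zero, in analogy with classical results for random configuration-model graphs). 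Once this is checked, both inequalities $\Card(\com{N}) \lesssim N^{N/2}$ and $\Card(\com{N}) \gtrsim N^{N/2}$ follow from the single computation above.
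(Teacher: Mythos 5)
Your argument is correct and is essentially the paper's approach: the paper passes to the dual trivalent graph and invokes Lemma~\ref{lem:countEquivs}, whose proof is exactly your labeled-configuration/orbit-stabilizer count, so you have simply inlined that count on the triangles themselves (your wreath product $\Z/3\wr S_N$ playing the role of the paper's $S_{2n}$ together with its $6^N$ decoration factor, all of which only matter up to exponentials). One remark on the caveat you flag: for the upper bound, ``random matchings are connected with probability bounded away from zero'' does not by itself control the number of \emph{disconnected} isomorphism classes (their orbits are small precisely because their stabilizers are large), but your stratification by component sizes does close this once the connected bound is known for every $n\le N$ --- and the paper itself elides the same point.
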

\begin{proof}
 If $\Sigma\in \com{N}$, we can construct a trivalent
 graph with $N$ vertices by letting the vertices of the graph be the
 faces of $\Sigma$ and connecting vertices whose corresponding faces
 share an edge. Given a trivalent graph, we can
 construct a surface with the corresponding pattern of gluings, so by
 Lemma~\ref{lem:countEquivs}, $\Card(\com{N})\gtrsim N^{N/2}$. 
 
 On the other hand, many surfaces might correspond to the same graph,
 since the graphs do not record which of the three edges of each
 triangle is glued to which other edge. With this information, a
 graph uniquely identifies a surface, but there are only $6^N$ ways to
 add this information to the graph, so $\Card(\com{N})\lesssim
 N^{N/2}$ as well.
 \end{proof}

In contrast, the number of ways to triangulate a surface of a fixed
genus with many triangles grows only exponentially:

\begin{lemma}\label{lem:countLowGenus}
 For any $g\ge 0$, $k \ge 0$, 
 $$\Card({\rm Com}_{n} {\rm Mod}_{g,k}) \precsim e^{n}.$$

In other words,

 $$\Card({\rm Com}_{n} {\rm Mod}_{g,k}) \le C(g,k) e^{n}.$$

\end{lemma}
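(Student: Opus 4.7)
The plan is to encode each surface in $\mathrm{Com}_{n}\mathrm{Mod}_{g,k}$ by a rooted spanning tree of its dual graph together with a matching of the sides of the polygon obtained by cutting along the non-tree edges, and to bound both pieces of data by exponentials in $n$.

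First I would fix $\Sigma \in \mathrm{Com}_{n}\mathrm{Mod}_{g,k}$ with $b$ boundary edges and form its dual graph $G^{\ast}$: the vertices are the $n$ triangles, and the edges come from pairs of triangles sharing an interior edge. Each vertex has degree at most $3$, and Euler's formula applied to $\Sigma$ gives $E_{\Sigma} = (3n+b)/2$ and $V_{\Sigma} = 2-2g-k+(n+b)/2$, so $G^{\ast}$ has $(3n-b)/2$ edges.

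Next I would pick a root triangle, a rooted spanning tree $T^{\ast}$ of $G^{\ast}$, and cut $\Sigma$ along the interior edges not in $T^{\ast}$. The result is a combinatorial disk $D$ whose boundary carries $n+2$ cyclically ordered sides: $b$ of these are the genuine boundary edges of $\Sigma$, while the remaining $n-b+2$ come in matched pairs that must be glued to reconstruct $\Sigma$. Because $G^{\ast}$ has maximum degree $3$, the number of rooted labelled trees on $n$ vertices of degree at most $3$ is bounded by $c^{n}$ for an absolute constant $c$, by a bounded-degree Pr\"ufer-type count.

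For fixed $g$ and $k$, I would then bound the number of pairings of the $n-b+2$ cut sides of $D$ that reconstitute a surface of genus $g$ with $k$ boundary components. This is the classical polygon-gluing problem, and a Harer--Zagier-type enumeration of one-face maps of fixed topology shows that this count is at most $C(g,k)\cdot c'^{\,n}$ for an absolute $c'$. Combining the two exponential bounds, and noting that the encoding counts each surface at least once, yields $\Card(\mathrm{Com}_{n}\mathrm{Mod}_{g,k}) \leq C(g,k)\cdot C^{n}$ for a suitable absolute $C$, which is the claim.

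The main obstacle is the polygon-matching bound: the total number of perfect matchings of $n+2$ sides is $(n+1)!!$, which is super-exponential, so one must genuinely exploit the topological constraint. A self-contained implementation could proceed by induction on $g+k$, with base case the disk ($g=0$, $k=1$, where only the empty matching is allowed and the count reduces to the bounded-degree tree enumeration) and inductive step cutting the surface along a non-separating simple combinatorial loop (which reduces $g$ by one) or along an arc joining two boundary components (which decreases $k$), each cut contributing at most a polynomial factor to the enumeration.
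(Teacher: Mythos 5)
Your route (spanning tree of the dual graph plus a count of polygon-side gluings of fixed topological type) is genuinely different from the paper's, and your self-contained fallback is close to what the paper actually does; but as written there are two concrete gaps. First, the tree count: the number of rooted \emph{labelled} trees on $n$ vertices with all degrees at most $3$ is not bounded by $c^n$. Via Pr\"ufer sequences these trees correspond to sequences of length $n-2$ in which each label occurs at most twice, and already the injective sequences number more than $(n-2)!$, which is super-exponential. The slip is fixable: since surfaces are only counted up to simplicial isomorphism, what you actually need is the number of rooted \emph{plane} trees of degree at most $3$ (times an exponential factor recording which side of each triangle carries which tree edge), and that count is indeed exponential --- but the statement you invoke is false as written.

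The serious gap is the gluing bound. Harer--Zagier enumerates \emph{complete} gluings of a polygon into a \emph{closed} surface of given genus; what you need is the number of \emph{partial} matchings of the $n+2$ sides producing genus $g$ with exactly $k$ boundary circles, and that version is not off-the-shelf: it requires a proof or a precise reference. Your proposed induction does not supply it, because its base case is wrong: for the disk it is not true that only the empty matching occurs. A triangulated disk with interior vertices has a dual graph containing cycles (the triangles around an interior vertex form one), so after cutting along the non-tree dual edges the polygon must be reglued along a nontrivial matching; the disk-producing (non-crossing) gluings of an $(n+2)$-gon are Catalan-many, not unique. (A smaller bookkeeping issue: cutting a nonseparating loop lowers $g$ by one but raises $k$ by two, so $g+k$ does not decrease; induct on $2g+k$, or remove the genus first.) For comparison, the paper handles exactly this base case by quoting Brown's enumeration of triangulations of the disk (after two barycentric subdivisions to reduce to the simplicial setting), and then performs the same cutting reduction you sketch: at most $2g+k$ cuts, each regluing being determined by at most $(3n)^4$ choices of endpoints of the identified boundary paths. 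If you replace your base case by Brown's theorem, or by a correct argument that disk-producing matchings are non-crossing and hence at most exponentially many, your plan closes up and essentially merges with the paper's proof.
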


\begin{proof}

 In the special case that $(g,k)=(0,1)$ (i.e., for triangulations of
 a disk), this follows from a result of Brown \cite{Brown}; we will
 reduce the general case to the case of a disk.

 Brown counted the number of rooted simplicial triangulations of the disk, that
 is, triangulations with a marked oriented boundary edge such that
 the endpoints of each edge are distinct and no face is glued to itself, and showed
 that the number of such with $j+3$ boundary
 vertices and $k$ interior vertices is
 $$\frac{2(2j+3)!(4k+2j+1)!}{(j+2)!j!k!(3k+2j+3)!}.$$
 Restating this in terms of the number $n=j+2k+1$ of triangles, we
 find that
 $$\frac{2(2j+3)!(2n-1)!}{(j+2)!j!\bigl(\frac{n-j-1}{2}\bigr)!(\frac{3n+j-3}{2}\bigr)!},$$
 where we require that $0\le j \le n-1$ and $j\equiv n-1 \mod 2$.
 Call this number $\theta(n,j)$. Our combinatorial surfaces differ from
 Brown's in that the two endpoints of an edge may be identified, but any
 element of ${\rm Com}_{n} {\rm Mod}_{0,1}$ can be barycentrically
 subdivided twice to get a simplicial triangulation. We thus find that
 $$\Card({\rm Com}_{n} {\rm Mod}_{0,1}) \le \mathop{\sum_{j=0}^{6n-1}}_{j\equiv 6n-1\;(\text{mod 2})}\theta(6n,j).$$

 Since $j\le n-1,$ there is a $c$ such that
 $$\theta(n,j)\le e^{cn}
 \frac{(2n-1)!}{\bigl(\frac{n-j-1}{2}\bigr)!(\frac{3n+j-3}{2}\bigr)!}=e^{cn} (2n-1)\binom{2n-2}{\frac{n-j-1}{2}}\le e^{cn}2^{2n-2}(2n-1),$$
 so there is a $c'$ such that for all $n>0$, 
 $$\mathop{\sum_{j=0}^{n-1}}_{j\equiv n-1\;(\text{mod 2})}\theta(6n,j)\le
 e^{c'n}.$$

 Now consider $\commodgk$. If $\Sigma$ is a triangulated genus-$g$
 surface with $k$ holes and $(g,k)\ne (0,0)$, we can cut it along
 non-separating simple closed curves or simple arcs between boundary
 components until we get a disk; this takes at most $2g+k$ cuts. We
 can thus obtain any element of $\commodgk$ by performing at most
 $2g+k$ gluings on an element of ${\rm Com}_{n} {\rm Mod}_{0,1}$.
 Each gluing identifies two edge paths on the boundary, so a gluing
 is determined by the endpoints of the paths that are glued. There
 are at most $(3n)^4$ ways to perform each gluing, so as long as
 $(g,k)\ne (0,0)$,
 $$\Card({\rm Com}_{n} {\rm Mod}_{g,k}) \le e^{c'n}(3n)^{2g+k}
 \precsim e^{n}.$$ For the case that $(g,k)=(0,0)$, note that if
 $\Sigma$ is a triangulation of a sphere, then we can obtain a
 triangulation of the disc by cutting along an edge of $\Sigma$, so
 $$\Card({\rm Com}_{n} {\rm Mod}_{0,0}) \le e^{c'n} \precsim
 e^{n}$$
 as desired.
\end{proof}

\subsection{Combinatorial pants decompositions}

In this part, we define pants decompositions of
combinatorial surfaces and their lengths. We then focus our interest
on pants decompositions of minimal length, and we show that in the
isotopy class of such a pants decomposition there is always a 
pants decomposition of a particular type, called a tight pants decomposition.

A pants decomposition of a surface of genus $g$ is a
maximal set of disjoint and freely homotopically distinct non-trivial
simple closed curves. A pants decomposition always contains $3g-3$
curves, and its complementary region consists of a set of $2g-2$ three
holed spheres, or pairs of pants. To define a pants decomposition in
the combinatorial setting, we focus on these pairs of pants.

A combinatorial pair of pants will consist of a simplicial complex
equipped with some boundary curves. Let $\Delta$ be a simplicial
complex which is a deformation retract of a three-holed sphere $M_{0,3}$. If we
consider $\Delta$ as a subset of $M_{0,3}$, this implies that a
regular neighborhood of $\Delta$ is a three-holed sphere. The
boundary curves of this three-holed sphere correspond to simplicial
curves in the boundary of $\Delta$, and when $\Delta$ is equipped with
these boundary curves, we call it a {\em combinatorial pair of pants}.
These curves inherit an orientation from $M_{0,3}$. Note that
$\Delta$ need not be a manifold; for instance, it could be two
vertices connected by three edges. In general, $\Delta$ may contain
edges that are not boundaries of triangles. Such edges we call
stranded.

We can glue pairs of pants to get surfaces. If $P_1,\dots,
P_{2g}$ are combinatorial pairs of pants, each one has three
boundary components, and we can identify pairs of boundary components
which have the same length. Like its geometric analogue, there are
many ways to identify the same pair of boundary components, and these
ways differ by a shift; by fixing a basepoint on each boundary curve,
we can define twist parameters for the gluing. If we glue all of the
boundary curves in pairs, we obtain a complex $\bigcup P_i/\sim$;
this may not be a surface, because there may still be edges which are
not part of a triangle. If $\bigcup P_i/\sim$ is 
isomorphic to a triangulated surface $\Sigma$, we call the collection
of the $P_i$, the gluing instructions, and the isomorphism a {\em
 combinatorial pants decomposition} of $\Sigma$. The boundary curves
of the $P_i$ project to simplicial curves on $\Sigma$; we call these the
boundary curves of the pants decomposition. If the boundary curves
have minimal (combinatorial) length in their free homotopy classes, we
say that the pants decomposition is minimal.

We will show that a geometric pants decomposition gives rise to a
minimal combinatorial pants decomposition. Recall that a Lipschitz
closed curve on a triangulated surface is homotopic to a simplicial
curve whose length is bounded by a constant times the length of the
original curve:

\begin{lemma} Let $\alpha:S^1 \to \Sigma$ be a Lipschitz curve on a
 triangulated surface $\Sigma$. There is a simplicial curve
 $\lambda$ on $\Sigma$ which is homotopic to $\alpha$ and whose
 length satisfies
 $$\ell(\lambda) \leq 2 \ell(\alpha).$$
\end{lemma}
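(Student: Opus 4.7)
I would prove this by a randomized rounding argument. After perturbing $\alpha$ infinitesimally---which changes neither the homotopy class nor, in the limit, the length---I may assume $\alpha$ meets the $1$-skeleton of the triangulation transversely and avoids its vertices. Let $0 = t_0 < t_1 < \cdots < t_n = \ell(\alpha)$ be the arc-length parameters at which $\alpha$ crosses an edge, let $e_i$ be the edge crossed at time $t_i$, and let $T_i$ denote the unique triangle containing $\alpha([t_i,t_{i+1}])$; thus both $e_i$ and $e_{i+1}$ are edges of $T_i$.

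For each edge $e$ of the triangulation, fix an isometric parametrization $e\cong[0,1]$ and draw an independent uniform threshold $s_e\in[0,1]$. If $\alpha(t_i)$ lies at parameter $p_i\in(0,1)$ on $e_i$, snap it to the endpoint at parameter $0$ when $p_i<s_{e_i}$, and to the endpoint at parameter $1$ otherwise; call the resulting vertex $u_i$. Define $\lambda$ to be the closed simplicial loop $u_0\to u_1\to\cdots\to u_n=u_0$, in which consecutive vertices $u_i,u_{i+1}$ are joined by the unique edge of $T_i$ connecting them (the step being trivial when $u_i=u_{i+1}$). Since each triangle $T_i$ is contractible, $\lambda$ is homotopic to $\alpha$: first slide each crossing $\alpha(t_i)$ along $e_i$ to $u_i$ in a small neighborhood of $t_i$, then deform the resulting arc from $u_i$ to $u_{i+1}$ within $T_i$ onto the chosen edge.

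Since every edge has length $1$, we have $\ell(\lambda)=|\{i:u_i\neq u_{i+1}\}|$, so it suffices to show $\E[\ell(\lambda)]\le 2\ell(\alpha)$; a deterministic choice of thresholds attaining the bound then exists. Writing $l_i=t_{i+1}-t_i$, I would establish the pointwise estimate $\pr[u_i\neq u_{i+1}]\le 2l_i$. When $e_i=e_{i+1}$, the event $\{u_i\neq u_{i+1}\}$ occurs precisely when $s_{e_i}$ lies strictly between $p_i$ and $p_{i+1}$, which has probability $|p_i-p_{i+1}|\le l_i$. When $e_i\neq e_{i+1}$, the two edges share a vertex $v$ of $T_i$ subtending an angle $\pi/3$; letting $p_i,p_{i+1}$ be the distances from $v$, independence of $s_{e_i},s_{e_{i+1}}$ gives $\pr[u_i\neq u_{i+1}]=p_i+p_{i+1}-p_ip_{i+1}$, while the law of cosines gives $l_i\ge\sqrt{p_i^2+p_{i+1}^2-p_ip_{i+1}}$. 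The main (elementary) obstacle is then the inequality $a+b-ab\le 2\sqrt{a^2+b^2-ab}$ for $a,b\in[0,1]$, which after squaring reduces to $3(a-b)^2+ab\bigl(2(a+b)-ab\bigr)\ge 0$, manifestly true. Summing over $i$ yields $\E[\ell(\lambda)]\le 2\ell(\alpha)$, completing the proof.
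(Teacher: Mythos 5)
Your proof is correct, but it takes a genuinely different route from the paper's. The paper argues deterministically: each arc that the $1$-skeleton cuts out of (a perturbation of) $\alpha$ is homotoped to the shorter of the two boundary arcs of its triangle, the factor $2$ coming from the same $\pi/3$ law-of-cosines estimate you use; the concatenation of these boundary paths is not yet simplicial, since consecutive pieces meet at interior points of edges and may backtrack, so a final length-decreasing homotopy is needed to cancel the backtracking. Your randomized rounding eliminates that cleanup step: because the threshold $s_e$ is attached to the edge $e$ of $\Sigma$, the two arcs adjacent to a crossing point automatically agree on which vertex it snaps to, so the output is an honest edge path by construction, and linearity of expectation converts the local estimate $\pr[u_i\ne u_{i+1}]\le 2l_i$ into the global bound. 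Your elementary inequality $a+b-ab\le 2\sqrt{a^2+b^2-ab}$ checks out and is tight in exactly the configuration where the paper's deterministic bound is tight (an arc clipping a corner near a vertex). One caveat: the combinatorial surfaces in this paper allow two sides of a single triangle to be glued to the same edge of $\Sigma$, in which case $e_i\ne e_{i+1}$ as sides of $T_i$ while $s_{e_i}=s_{e_{i+1}}$, so your independence claim fails there. The estimate survives nonetheless, since under any coupling a union bound gives $\pr[u_i\ne u_{i+1}]\le \pr[u_i\ne v]+\pr[u_{i+1}\ne v]= a+b\le 2\sqrt{a^2+b^2-ab}$, the last step being equivalent to $3(a-b)^2\ge 0$; the paper's argument is insensitive to such identifications because it operates entirely inside the abstract triangle. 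In short: your approach trades the paper's backtracking-cancellation step for probabilistic bookkeeping, and both ultimately rest on the same Euclidean geometry of the unit equilateral triangle.
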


\begin{proof}
 For any $ \varepsilon>0$, we can perturb $\alpha$ to get a smooth curve
 $\alpha'$ of length $\ell(\alpha')\le \ell(\alpha)+ \varepsilon$ which
 avoids vertices of $\Sigma$ and intersects its edges transversely.
 The edges of the triangulation cut $\alpha'$ into finitely many
 arcs, each contained in a single triangle. Each arc $a$ cuts the
 boundary of its triangle into two arcs. We homotope the arc $a$ to
 the shortest of these two boundary arcs (say $b$). By elementary
 euclidean geometry, we have $\ell(b)\leq 2 \ell(a)$. As such, the
 resulting curve $\beta$ satisfies $\ell(\beta) \leq 2 \ell(\alpha)$.

 The resulting curve is not necessarily a simplicial curve as it
 may go partway along an edge and then backtrack. A further homotopy
 removes this backtracking and decreases the length. The resulting
 curve $\lambda$ is now a simplicial curve and has length at most
 the length of $\beta$. This proves the lemma.
\end{proof}

We can now focus our attention on the equivalent statement for full pants decompositions.

\begin{proposition}
 Let $\Sigma$ be a triangulated surface and let $\alpha_1,\dots,
 \alpha_{k}:S^1\to \Sigma$ be the boundary curves of a pants decomposition for
 $\Sigma$. There is a minimal combinatorial pants decomposition of
 $\Sigma$ with boundary curves $\lambda_1,\dots, \lambda_{k}:S^1\to \Sigma$ such
 that for all $i$, $\lambda_i$ is homotopic to $\alpha_i$ and
 $\ell(\lambda_i)\le 2 \ell(\alpha_i)$.
\end{proposition}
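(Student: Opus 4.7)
I would proceed in three steps: first approximate each $\alpha_i$ by a simplicial curve individually using the preceding lemma; then replace each approximation by a shortest simplicial representative in its free homotopy class; finally, verify that the resulting curves carry the structure of a combinatorial pants decomposition.

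For the first step, apply the preceding lemma to each boundary curve $\alpha_i$ separately to obtain a simplicial curve $\beta_i$, homotopic to $\alpha_i$, with $\ell(\beta_i) \le 2\ell(\alpha_i)$. For the second step, observe that for any fixed length bound $L$ the number of simplicial closed curves on $\Sigma$ of combinatorial length at most $L$ is finite; hence one may choose $\lambda_i$ to be a simplicial curve of minimum length in the free homotopy class of $\alpha_i$. This gives $\ell(\lambda_i) \le \ell(\beta_i) \le 2\ell(\alpha_i)$ together with the minimality property required by the definition of a minimal combinatorial pants decomposition.

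The remaining and most delicate step is to assemble the $\lambda_i$ into a combinatorial pants decomposition of $\Sigma$. The original $\alpha_i$ cut $\Sigma$ into $2g-2$ topological pairs of pants $Q_1, \dots, Q_{2g-2}$, each bounded by three (not necessarily distinct) curves from $\alpha_1, \dots, \alpha_k$. Since each $\alpha_i$ is freely homotopic to $\lambda_i$, one can construct a continuous deformation of $\Sigma$ moving each $\alpha_i$ onto $\lambda_i$ (pushed first through an annular neighborhood of $\alpha_i$, then across individual triangles exactly as in the proof of the preceding lemma). Under this deformation, each $Q_j$ is sent onto a simplicial subcomplex $P_j \subset \Sigma$ whose boundary is the image of $\partial Q_j$ and hence a concatenation of the corresponding $\lambda_i$. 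Because the deformation restricts to a homotopy equivalence on each $Q_j$, the complex $P_j$ is a deformation retract of a three-holed sphere, which is precisely the definition of a combinatorial pair of pants; the gluing instructions along the $\lambda_i$ are inherited from the original pattern of identifications of the $Q_j$.

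The main obstacle will be this last step. Although each $\lambda_i$ is a minimal simplicial curve, the $\lambda_i$ need not be embedded or pairwise disjoint, and two of them may share edges, or a single $\lambda_i$ may traverse the same edge twice. This is exactly what forces the $P_j$ to contain stranded edges and non-manifold points. Verifying that $P_j$ is genuinely a deformation retract of a three-holed sphere, and that the disjoint union $\bigsqcup_j P_j$ glued along the $\lambda_i$ really reassembles, as a CW-complex, to the triangulated surface $\Sigma$, is where the flexibility built into the definition of combinatorial pair of pants (allowing stranded edges and non-manifold structure) becomes essential. I expect the argument to reduce to tracking, triangle by triangle, how the simultaneous homotopies interact, and checking that every triangle of $\Sigma$ ends up in a unique $P_j$ while every edge not covered by a triangle in some $P_j$ appears as a stranded edge in one of them.
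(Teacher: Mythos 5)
There is a genuine gap, and it is exactly at the step you flag as the ``main obstacle'': you never actually deal with it, and the way you set things up makes it harder than necessary. Choosing each $\lambda_i$ independently as a shortest simplicial representative gives you no control over how these curves meet: two of them (or a single one with itself) may run along the same edges of the $1$-skeleton and cross each other combinatorially, and then there is simply no ``continuous deformation of $\Sigma$ moving each $\alpha_i$ onto $\lambda_i$'' that carries the pants $Q_j$ onto subcomplexes $P_j$ glued along the $\lambda_i$ --- the cutting structure is destroyed by the overlaps. Asserting that the deformation exists and that each $P_j$ is a deformation retract of a three-holed sphere is precisely the content that needs proof; ``tracking, triangle by triangle, how the simultaneous homotopies interact'' is not an argument, and for a bad choice of the minimal representatives the conclusion as you state it (with those particular $\lambda_i$) can fail to be realizable by disjoint curves on the $1$-skeleton at all.

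The paper's proof supplies two ideas that your sketch is missing. First, since distinct combinatorial curves may be forced to share edges and vertices, the surface is blown up to an auxiliary complex $\Sigma'$: each multiply-used edge is replaced by a stack of edge rectangles and each vertex by a vertex polygon, with a collapsing map $p:\Sigma'\to\Sigma$ that preserves the relevant length. This creates enough room for the lifted (``semi-simplicial'') curves to be made honestly disjoint. Second, a surgery argument makes them disjoint without losing minimality: at a pair of intersection points one finds two subarcs $\gamma_1,\gamma_2$ that are homotopic rel endpoints; minimality of the curves forces $\ell(\gamma_1)=\ell(\gamma_2)$, so swapping them and smoothing removes two intersection points while keeping every curve of minimal length in its class. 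Only after the curves are disjoint in $\Sigma'$ does one cut into genuine three-holed spheres and collapse back via $p$ to obtain the combinatorial pairs of pants (with stranded edges arising from the collapsed rectangles) and the curves $\lambda_i=p(\beta''_i)$, which are minimal and hence satisfy $\ell(\lambda_i)\le 2\ell(\alpha_i)$ by the preceding lemma. Note also that in this argument the $\lambda_i$ are the outputs of the surgery, not representatives fixed in advance as in your plan. Without the blow-up and the equal-length arc-swap, your proposal does not close the gap it correctly identifies.
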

\begin{proof}
 The first step is to approximate the $\alpha_i$ by simplicial
 curves. For all $i$, let $\beta_i$ be a simplicial closed curve of
 minimal length which is homotopic to $\alpha_i$. In general, the
 $\beta_i$ may share edges; we will subdivide $\Sigma$ and make them
 disjoint.

 We first duplicate the edges of $\Sigma$ so that the $\beta_i$ do
 not share edges. If $e=(x,y)$ is an edge of $\Sigma$ which occurs
 $n\ge 2$ times in the $\beta_i$, we replace $e$ with $n-1$ bigons
 glued edge-to-edge. We make no changes
 to edges which do not occur or occur only once. Now we replace the
 vertices of $\Sigma$; if a vertex has degree $d$, we replace it with
 a $d$-gon which we call a {\em vertex polygon}. Each incoming edge
 connects to one vertex of this $d$-gon; this makes the bigons
 created in the previous step into rectangles which we call {\em edge
 rectangles}. Each edge rectangle has two edges which are shared
 by vertex polygons and two edges which connect vertex polygons; we
 call the edges which connect vertex polygons {\em long edges}. Call
 the resulting complex $\Sigma'$. This complex is homeomorphic to
 $\Sigma$, and there is a natural map $p:\Sigma'\to \Sigma$ which
 collapses the edge rectangles to edges and the vertex polygons to
 vertices; this map sends long edges to edges of $\Sigma$
 homeomorphically.
 
\begin{figure}[htbp]
 \centering
 \includegraphics[width=8 cm]{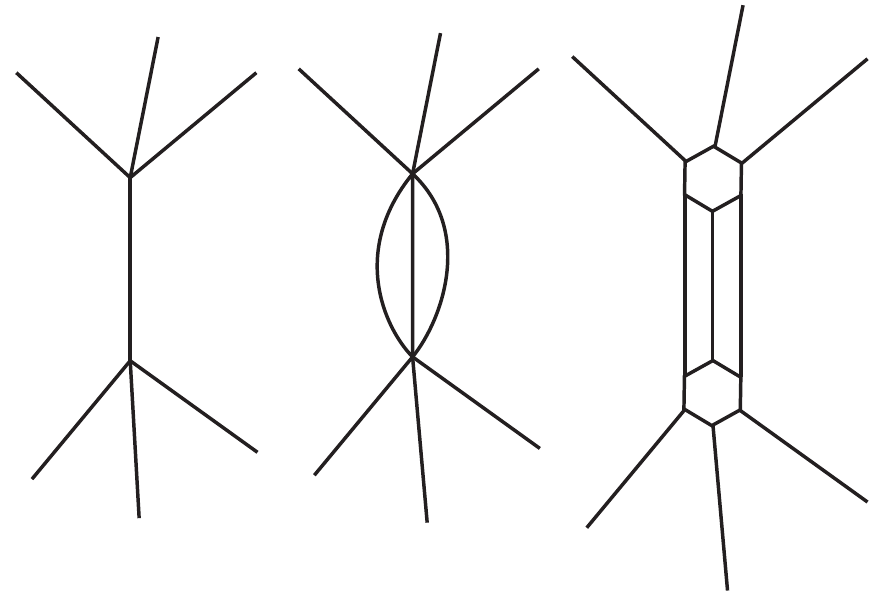}
 \caption{From $\Sigma$ to $\Sigma'$}
 \label{vertex}
\end{figure}

 We call a curve {\em semi-simplicial} if it consists of alternating
 long edges and curves in vertex polygons; the image of a
 semi-simplicial curve under $p$ is thus a simplicial curve. If
 $\gamma$ is a semi-simplicial curve, we define its length by
 $\ell(\gamma):=\ell(p(\gamma))$. We can lift each of the $\beta_i$
 to semi-simplicial curves in $\Sigma'$ by replacing edges of
 $\beta_i$ with corresponding long edges and replacing vertices of
 $\beta_i$ with curves in the vertex polygons. There are enough long
 edges in $\Sigma'$ that we can ensure that no long edge is used
 twice, so this gives us curves $\beta'_i$ in $\Sigma'$ which
 intersect only in the vertex polygons. We may assume that at most
 two curves intersect at a point and that all intersections are
 transverse.

 A standard argument allows us to perform surgeries to make these
 curves disjoint. If two curves $\beta'_i$ and $\beta'_j$ intersect
 (where possibly $i=j$), the fact that the two curves are homotopic
 to disjoint curves (or, in the case that $i=j$, to a simple curve)
 implies that there is a pair of intersection points $x, y$, a
 segment $\gamma_1$ of $\beta'_i$, and a segment $\gamma_2$ of
 $\beta'_j$, such that $\gamma_1$ and $\gamma_2$ connect $x$ and $y$
 and are homotopic relative to their endpoints.
 Because $\beta'_i$ and $\beta'_j$ have minimal length, we have
 $\ell(\gamma_1)=\ell(\gamma_2)$. We can modify $\beta'_i$ and
 $\beta'_j$ by swapping $\gamma_1$ and $\gamma_2$ and deforming the
 resulting curves near $x$ and $y$ to eliminate those two
 intersection points. This operation reduces the number of
 intersection points by two, so we can repeat it to eliminate all
 intersection points. Since we have only swapped subsegments of
 curves and performed homotopies inside vertex polygons, the
 resulting curves are still semi-simplicial and still have minimal
 length; call them $\beta''_1,\dots, \beta''_k$.

 We will get a combinatorial pants decomposition of $\Sigma$ by
 cutting $\Sigma'$ along these curves and collapsing edge rectangles
 and vertex polygons. The curves $\beta''_1,\dots, \beta''_k$ are
 the boundary curves of a geometric pants decomposition of $\Sigma'$
 into subsurfaces $P_1,\dots,P_{2k/3}$. Each of these subsurfaces is
 homeomorphic to a pair of pants (i.e., a three-holed sphere), and is
 the union of edge rectangles, subsets of the vertex polygons, and
 faces of $\Sigma'$ which come from triangles of $\Sigma$. If we
 collapse each edge rectangle in $P_i$ to an edge and each connected
 component of a vertex polygon to a vertex, we obtain a complex
 $P'_i$ which comes with a map $p_i:P'_i \to \Sigma$. The boundary
 curves of $P_i$ correspond to simplicial curves in $P'_i$, and with
 these boundary curves, $P'_i$ forms a combinatorial pair of pants.
 Furthermore, gluing the $P'_i$ along these boundary curves
 reconstructs the original surface $\Sigma$, making them a
 combinatorial pants decomposition of $\Sigma$. The boundary curves
 of this pants decomposition are $\lambda_i:=p(\beta''_i)$; as
 required, $\lambda_i$ is homotopic to $\alpha_i$, and since it has
 minimal combinatorial length, its length is no more than a constant
 factor larger than that of $\alpha_i$.
\end{proof}

Let $\Sigma$ be a genus $g$ surface and let $P_1,\dots,
P_{2g-2}$ be the pants in a combinatorial pants decomposition of
$\Sigma$.
We can view a combinatorial pair of pants $P$ as a collection of {\em
 clusters} connected by {\em strands}; indeed, we will construct a
graph $G$ whose vertices correspond to clusters of $P$ and whose edges
correspond to strands. We construct this graph as follows (see Fig.\
\ref{fig:strandCluster}):
\begin{enumerate}
\item For all vertices $v\in P$, if the link of $v$ has $d$ connected
 components and more than one is an interval, replace $v$ by a
 star with $d$ edges.
\item Shrink paths of edges to single edges.
\item Shrink groups of triangles which are glued along edges to single
 vertices.
\end{enumerate}
Each vertex of
$G$ corresponds to a group of triangles (indeed, a submanifold of $P$
with boundary)
or a single point; we call these clusters. We will call a
single-point cluster {\em degenerate}. Each edge corresponds to a
path of stranded edges of $P$ (possibly a path of length zero); we
call these strands. Since each cluster corresponds to a vertex of
$G$, we can define the {\em degree} of a cluster to be the degree of
the corresponding vertex.

\begin{figure}
 \centering
 \def\svgwidth{\columnwidth}
 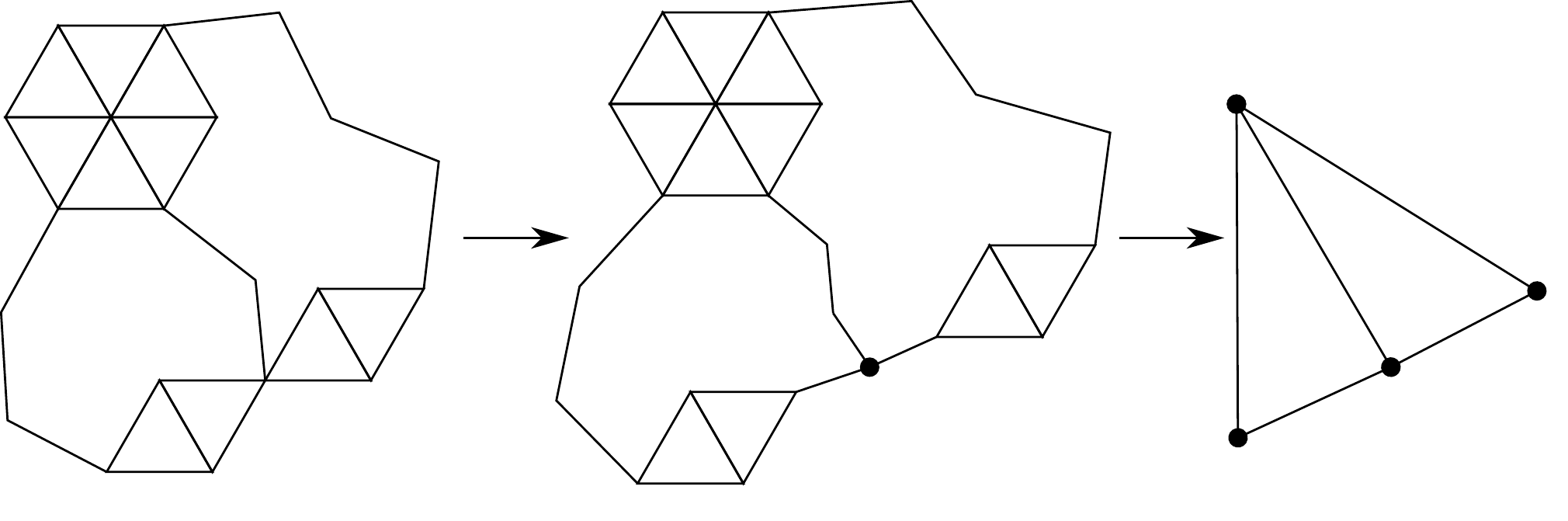
 \caption{The cluster graph. This pair of pants has four disk-type
 clusters, one of which ($C$) is degenerate and two of which ($B$,
 $D$) are loose disks.}
 \label{fig:strandCluster}
\end{figure}

The interior of a cluster can be homeomorphic to a disk, a
cylinder or a three holed sphere, and we call it disk-type,
cylinder-type, or three-holed-sphere-type accordingly. If a cluster
is a single point, we say it is disk-type. If the pants decomposition was
minimal, then a disk-type cluster can have degree two, three, or four.
A cylinder-type cluster has degree one or two, and a
three-holed-sphere-type cluster has degree zero. A pair of pants $P$ is called tight if none of
its disk-type clusters have degree 2. Such a cluster
will be called a loose disk. A minimal length
pants decomposition is called tight if all of its pants are tight,
i.e., do not contain any loose disks. 

\begin{figure}[htbp]
 \centering
 \includegraphics[width=8 cm]{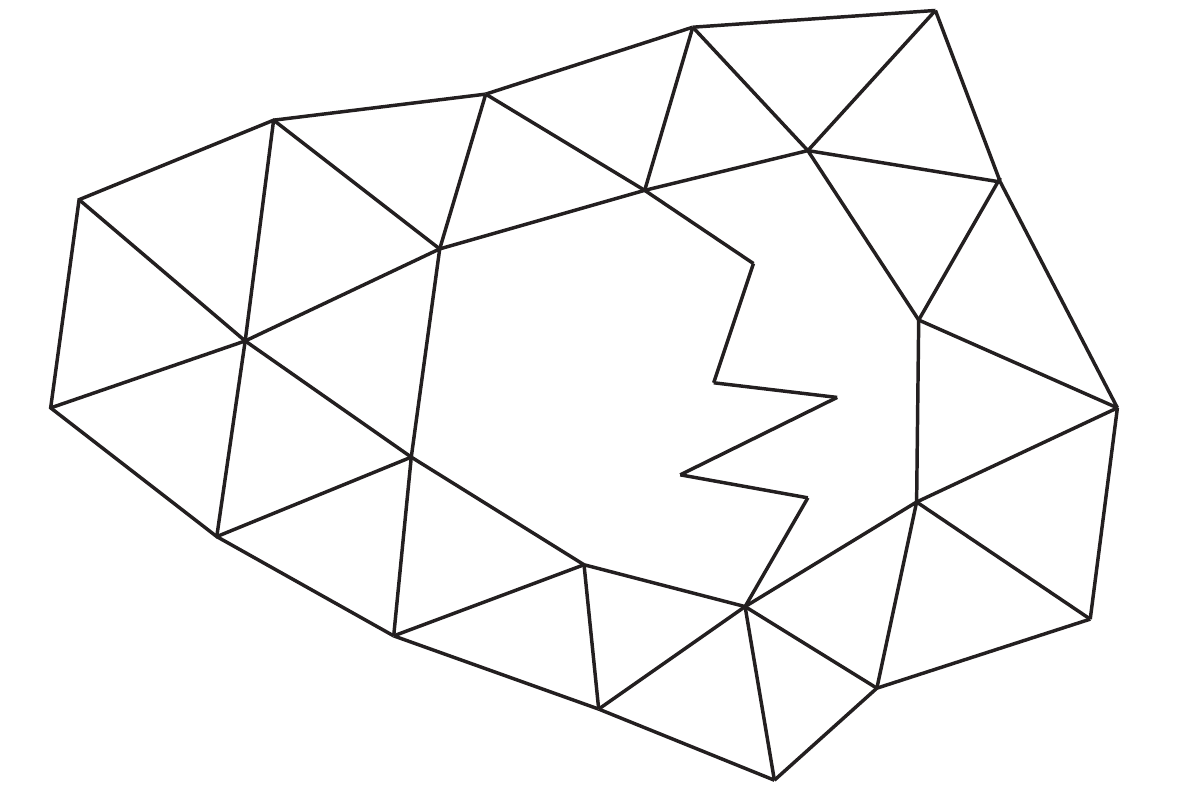}
 \caption{A pair of pants with a annulus-type cluster}
 \label{cylinder}
\end{figure}


We are now able to introduce the
main result of this section.
\begin{lemma} Any combinatorial surface $\Sigma$ admits a minimal length tight pants decomposition.
\end{lemma}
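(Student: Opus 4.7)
The plan is to obtain a tight minimal pants decomposition by starting from any minimal combinatorial pants decomposition and then reducing a carefully chosen complexity measure until no loose disks remain. First, $\Sigma$ admits \emph{some} minimal length combinatorial pants decomposition by the previous proposition applied to any topological pants decomposition (which exists for $g\ge 2$). Among all minimal length combinatorial pants decompositions of $\Sigma$, ranging over all isotopy classes, I would select one $\mathcal{P}$ that lexicographically minimizes the pair (total boundary length, number of triangles contained in loose disks of the decomposition). Such a minimizer exists because both quantities are non-negative integers bounded in terms of the fixed triangulation of $\Sigma$.

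Suppose for contradiction that $\mathcal{P}$ is not tight: some pair of pants $P_i$ contains a loose disk $D$ with its two strand attachments $s_1, s_2$. The boundary $\partial D$ splits into two arcs $a_1, a_2$ lying on (not necessarily distinct) boundary curves $\gamma, \gamma'$ of $\mathcal{P}$. The modification I would use pushes one of these boundary curves across $D$: because $D$ is simply connected, the arc $a_1 \subset \gamma$ is homotopic rel endpoints to the alternative route $s_1 \cup a_2 \cup s_2$ traversing the other side of $D$, so replacing $a_1$ by this alternative gives a simplicial curve $\gamma''$ homotopic to $\gamma$. Since $\gamma$ already has minimal length in its homotopy class, $\gamma''$ has length at least $\ell(\gamma)$; minimality of total boundary length then forces $\ell(\gamma'') = \ell(\gamma)$. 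But in the new decomposition, the disk $D$ has been absorbed into the adjacent pair of pants across $\gamma$ and is no longer a loose disk, so the number of triangles in loose disks strictly decreases, contradicting the choice of $\mathcal{P}$.

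The hard part will be making this surgery rigorous in both subcases. When $\gamma \neq \gamma'$, the surgery swaps an arc of $\gamma$ with a parallel copy of an arc of $\gamma'$, and one must check that the resulting curves are still simple, pairwise disjoint, freely homotopically distinct, and realize a valid combinatorial pants decomposition. When $\gamma = \gamma'$, the loose disk forms a combinatorial bigon traversed twice by $\gamma$, and pushing $\gamma$ off $D$ is a Dehn-twist-like move that may change the isotopy class of the pants decomposition; one must verify that the new curve is still essential and that the collection still decomposes $\Sigma$ into pairs of pants. In either case one must also check that no new loose disks of equal or larger triangle count are created, which will likely require refining the complexity measure with a tertiary term (for instance the total number of stranded edges) so that the rearrangement of triangles near $D$ can only strictly reduce it.
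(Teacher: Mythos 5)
Your overall strategy---start from a minimal combinatorial pants decomposition and slide a boundary curve across a loose disk $D$, using minimality to see that the two bounding arcs of $D$ have equal length so the slide preserves minimality---is the same move the paper uses. But the step you rely on for the contradiction is exactly the point that can fail: after pushing the common boundary curve across $D$, the disk is glued to the adjacent pair of pants $Q$ along the arc it was cut out of, and if that arc lands on a \emph{strand} of $Q$ (rather than on a cluster of triangles), then $D$ is again a disk-type cluster of degree two, i.e.\ it is still a loose disk, with exactly the same triangles as before. So your primary measure ``number of triangles contained in loose disks'' does not decrease (and note also that the paper's definition allows degenerate loose disks consisting of a single point, which contain no triangles at all, so this measure cannot even detect them). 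The proposed tertiary term is not shown to be monotone either: the slide leaves the arc $c'$ behind in $P$ as stranded edges while absorbing part of a strand of $Q$ into the boundary of $D$, and there is no evident strict decrease in the number of stranded edges. In short, no local potential function is exhibited that strictly drops under the slide, so the minimizer argument does not close. (A minor additional slip: the replacement arc should be the other bounding arc $a_2$ of $D$, not $s_1\cup a_2\cup s_2$; the strands are interior to the pair of pants and do not lie on the boundary curve.)

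The paper's proof accepts that a single slide may merely transport the loose disk and instead proves that the iterated sliding process must terminate, and this is where the real work lies. If the process never stopped, each slide would replace the arc $f=\mu(c)$ by $f'=\mu(c')$ in the boundary-curve system infinitely often, forcing every edge of $\Sigma$ to appear in $\mu(\partial D)$ with even multiplicity, hence $\mu(D)=\Sigma$; a count of ``$jk$-segments'' at vertices (using the circle structure of vertex links) shows the number of slides is in fact finite, and an Euler characteristic computation for a surface obtained by gluing a single disk to itself along its boundary shows $\Sigma$ would have genus at most $1$, contradicting $g\ge 2$. Your argument uses no global topology at all, which is a sign it cannot be complete: whether the sliding can go on forever is a genuinely global question about the surface, not a local one about the disk, and any correct proof must use the genus hypothesis somewhere, as the paper's does.
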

\begin{proof}
We need to show that if a minimal pants decomposition has pants with
loose disks we can isotope the curves to remove the disk from the pair
of pants without increasing length. We further need to make sure that
by doing so we are not just moving a loose disk somewhere else, and
that in fact we will have reduced the number of loose disks.

\begin{figure}[htbp]
 \centering
 \includegraphics[width=8 cm]{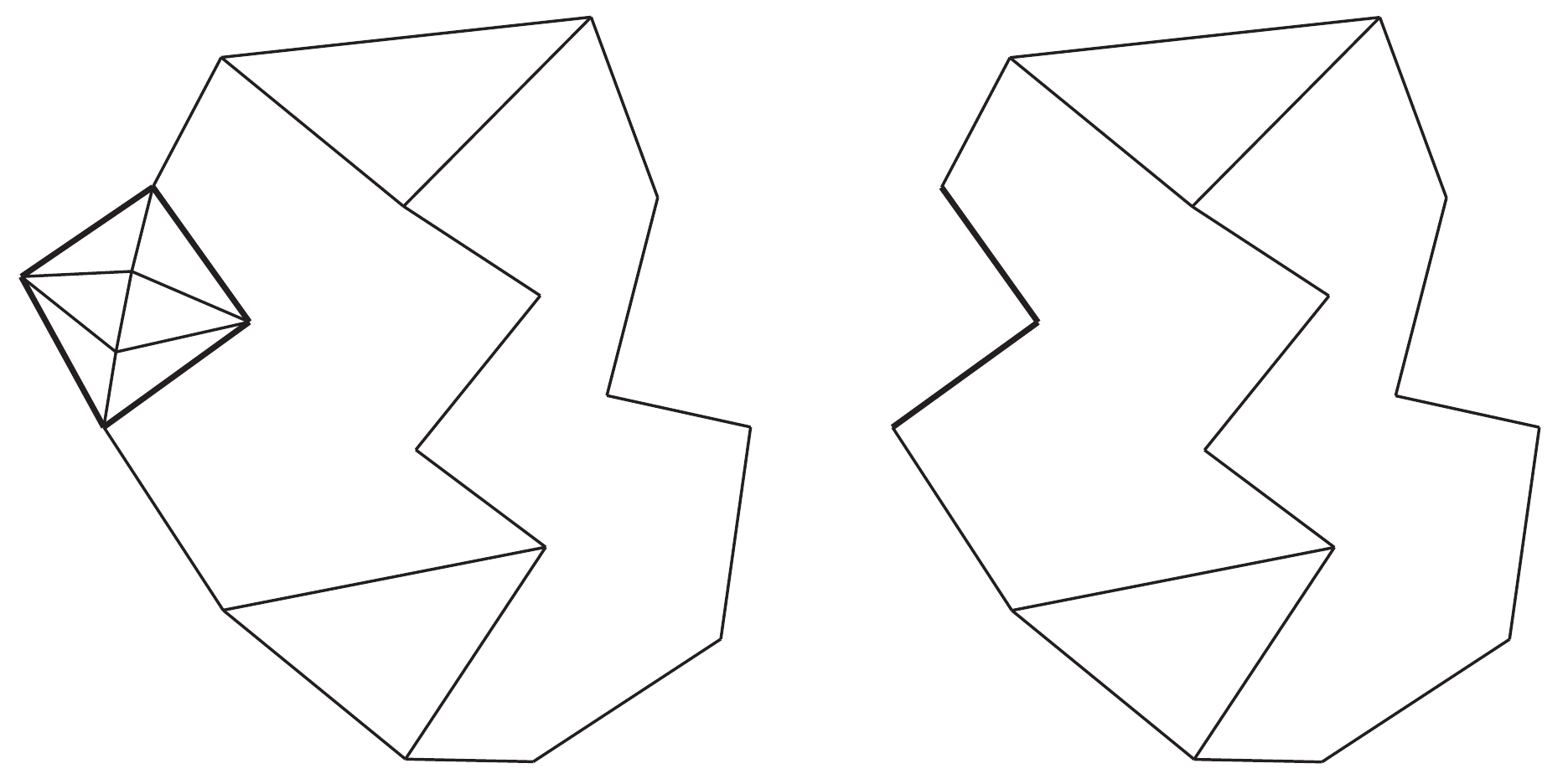}
 \caption{Removing a loose disk to get a tight pair of pants}
 \label{tight}
\end{figure}

The key technique in this proof is sliding a loose disk from one pair
of pants to another. Let $D$ be a loose disk which is part of a pair
of pants $P$ in a minimal pants decomposition, and say that $P$ has
boundary curves $\gamma_{a}:S_{a}\to P$, $a=1,2,3$. Since $D$ is a
loose disk, there are two strands which enter $D$, say at vertices
$x_1$ and $x_2$. These two vertices divide the boundary of $D$ into
two paths, and since the boundary curves of $P$ have minimal length,
the two paths have equal length. We denote them $c,c':[0,\ell]\to P$,
and we can choose $n$ and $n'$ so that $c$ is a subsegment of
$\gamma_{n}$ and $c'$ is a subsegment of $\gamma_{n'}$. If $P$ is
glued to $Q$ along $\gamma_{n}$, then we can slide that common
boundary curve over $D$ to transfer $D$ from $P$ to $Q$. This has the
effect of cutting $D$ out of $P$ (leaving $c'$) and gluing it on to
$Q$ along $c$, and it produces a new pants decomposition of $\Sigma$.
We call this process {\em sliding $c$ to $c'$}. Furthermore, since
$c$ and $c'$ must have equal lengths, the lengths of the boundary
curves are unchanged and the new pants decomposition is also minimal.

After such a slide, the pants decomposition differs from the original only
around $D$. All the clusters of the original decomposition have
counterparts in the new one, except possibly for clusters in $P$ and
$Q$; the move deleted one cluster from $P$ and added at most one to
$Q$, depending on whether $D$ was glued to a cluster or to a strand.
If $D$ was glued to a cluster, then the move reduced the number of
loose disks by one. Otherwise, the slide glues $D$ to a strand of
$Q$. In this case, after the slide $D$ becomes a loose disc in $Q$ bounded by
segments $c_1$ and $c'_1$.

We will inductively define a sequence $X_0,X_1,\dots,X_k$ of pants
decompositions which all differ by slides. Each $X_i$ except $X_k$
will have a loose disk $D_i$ in a pair of pants $P_i$. This disk will
be
isomorphic to $D$ and which is bounded by two curves $c_i:[0,\ell]\to
P_i$ and $c'_i:[0,\ell]\to P_i$. Recall that $\Sigma$ is a quotient
of the pairs of pants in $X_i$; let $\mu_i:P_i\to \Sigma$ be the
restriction of the quotient map to $P_i$. We will require that
$\mu_i\circ c_i:[0,\ell]\to \Sigma$ be the same curve for all $i<k$
and likewise for $\mu_i\circ c'_i:[0,\ell]\to \Sigma$, and we will
define $f:=\mu_i\circ c_i$ and $f':=\mu_i\circ c'_i$. 

Let $X_0$ be the original decomposition of $\Sigma$; this has a loose
disk $D_0\cong D$ bounded by $c_0:=c$ and $c'_0:=c'$. We construct
$X_{i+1}$ from $X_i$ by sliding $c_i$ to $c'_{i}$. If this reduces
the number of loose disks, we stop, letting $k=i+1$; otherwise, $D_i$
corresponds to a loose disk of $X_{i+1}$, bounded by $c_{i+1}$ and
$c'_{i+1}$, and we continue. We claim that this process eventually
stops.

By way of contradiction, say that the process does not stop.
Sliding $c_i$ to $c'_i$ affects the boundary curves of $X_i$ by
replacing an occurrence of $f$ by $f'$. If the process does not
stop, then we can replace $f$ by $f'$ infinitely many times. In
particular, each edge of $\Sigma$ occurs as many times in $f$ as it
does in $f'$, so each edge of $\Sigma$ occurs an even number of times
in $\mu(\partial D)$ (indeed, either 0 or 2). Consequently,
$\mu(D)=\Sigma$. Since $\mu$ is injective on the interior of
$D$, this means that we can obtain $\Sigma$ by gluing the edges of
$D$ together.

If $w=c(0)$ or $w=c(\ell)$, we call $w$ an endpoint of $D_0$. We
claim that for all $v\in \Sigma$, $\mu^{-1}(v)$ contains at most 2
non-endpoint vertices of $D$. Say $v$ is such that
$\{w_1,w_2,w_3\}\subset \mu^{-1}(v)$ for some 3 distinct non-endpoint
vertices $w_1,w_2,w_3\in D$. The link of $v$ is a circle, and it
contains 3 intervals corresponding to the links of the $w_i$. Let
$S\subset \link v$ be the complement of the interiors of these
intervals; this consists of three connected components, $S_1, S_2$,
and $S_3$. If a length-2 segment of a boundary curve of $X$ passes
through $v$, there are $j$ and $k$ such that it approaches $v$ from
the direction of $S_j$ and leaves $v$ in the direction of $S_k$. We
call this a $jk$-segment. Note that the only $jk$-segments of
$\partial D$ with $j\ne k$ are those centered at the $w_i$. Since
$D$ and the $S_a$'s are unaffected by repeatedly sliding $c$ to $c'$, we can
discuss $jk$-segments of the boundary curves of the $X_i$ too.

One of the $w_i$ is in the image of $c$; say $w_1=c(x)$, so that $f$
passes through $v$ at $x$, and number the $S_i$ so that this is a
$12$-segment. Replacing $f$ by $f'$ deletes this segment, and we
claim that replacing $f$ by $f'$ decreases the number of $12$-segments
by one. The path $f'$ has no $12$-segments, so it only remains to
check that replacing $f$ by $f'$ can't introduce new $12$-segments
centered at the endpoints of $D$. But if $f(0)=f'(0)=v$ or
$f(\ell)=f'(\ell)=v$, then $f$ and $f'$ both leave $v$ in the
direction of the same $S_i$, so a $jk$-segment centered at an endpoint
remains a $jk$-segment when $f$ is replaced by $f'$. Since the number
of $12$-segments in $X$ is finite, the process terminates after a
finite number of slides.

Thus, $\Sigma$ can be obtained by gluing a disc to itself along its
edges; the resulting gluing has one face, namely $D$, $\ell$ edges,
and at least $(\ell-1)$ vertices. Thus, if the process does not
terminate, then $\Sigma$ has genus at most 1, which is a contradiction.
\end{proof}

\subsection{Counting tight pants decompositions}

The goal of this part is to count the number of tight pants decompositions of bounded length. 

\newtheorem*{mainest}{Main Estimate}

\begin{mainest} There is a $c>0$ such that the number of triangulated
 surfaces in $\com{N}$ with genus $g$ and (tight) pants decompositions
 of total length at most $L$ is $\le exp(c N) g^g (L/g)^{6g}$.
\end{mainest}

First we will count the number of different tight pairs of pants with boundary curves of controlled length. Next we will count the number of ways of gluing these pants together into a surface.

\begin{lemma} \label{lem:countPants} There is a $c_0>0$ such that the number of tight pairs of
 pants with boundary curves of lengths $l_1, l_2,$ and $l_3$ and with
 $A$ triangles is $\le c_0 e^{c_0 A}$.
\end{lemma}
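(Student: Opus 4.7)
My plan is to leverage the cluster-and-strand decomposition from Section 3.2 to reduce the count to three pieces of data: the topology of the cluster graph $G$, the triangulations of the individual clusters, and the lengths of the strands. Once these are all fixed, the tight pair of pants is completely determined.

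The first step is to show that tightness together with $\chi(P) = -1$ pins $G$ down to a finite list. Write $d_3, d_4$ for the numbers of disk-type clusters of degree $3$ and $4$; $c_1, c_2$ for cylinder-type clusters of degree $1$ and $2$; $t$ for three-holed-sphere-type clusters; and $s$ for the number of strands. Computing Euler characteristic via clusters and strands (each strand contributes $\chi = 1$, but its two endpoints on cluster boundaries contribute $-2$ to the correction) gives $(d_3 + d_4) - t - s = -1$, while the degree-sum identity reads $3 d_3 + 4 d_4 + c_1 + 2 c_2 = 2s$. Combining these forces $d_3 + 2 d_4 \le 2$, $s \le 3$, and $c_1 + 2 c_2 \le 6$. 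Thus the number of possible cluster graphs, taken as multigraphs with type-labeled vertices, is bounded by an absolute constant.

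Next, I would count the remaining combinatorial data once $G$ is fixed. Each cluster is a triangulated disk, cylinder, or three-holed sphere, so by Lemma~\ref{lem:countLowGenus} the number of triangulations of a fixed cluster with $n$ triangles is at most $C e^n$. Distributing the $A$ triangles among the boundedly many clusters costs only a polynomial factor $A^{O(1)}$, and specifying the attachment vertex on each cluster boundary for each strand endpoint costs another polynomial factor. All told these choices contribute $e^{O(A)}$ possibilities.

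The remaining ingredient is the strand lengths $m_1, \dots, m_s$. Since each stranded edge is traversed twice by the boundary curves, one obtains a linear system
\[
l_i = b_i + \sum_{j=1}^s \alpha_{ij}\, m_j, \qquad i = 1, 2, 3,
\]
where $b_i$ is determined by the triangulation and the integers $\alpha_{ij} \in \{0, 1, 2\}$ satisfy $\sum_i \alpha_{ij} = 2$. Because $s \le 3$, this is three equations in at most three unknowns, and I would verify on the short list of admissible tight cluster graphs that the coefficient matrix $(\alpha_{ij})$ has rank $s$, so the $m_j$ are uniquely determined from $l_1, l_2, l_3$ (and if the data is inconsistent, no such pair of pants exists). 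I expect this rank check to be the main obstacle: if the rank failed to equal $s$ in some configuration, one would pick up an $L^{s - r}$ factor and lose the $L$-independence in the bound. Assuming the verification goes through, multiplying the three contributions yields the claimed bound $\le c_0 e^{c_0 A}$.
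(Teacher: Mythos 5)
Your proposal is correct and follows essentially the same route as the paper: bound the finitely many cluster--strand combinatorial types of a tight pair of pants (the paper does this via $\beta_1(P)=b+s-c+1=2$ and the degree-$\ge 3$ condition rather than your Euler-characteristic/degree-sum count, with the same conclusion of at most two clusters and three strands), count cluster triangulations with Lemma~\ref{lem:countLowGenus}, pay a polynomial factor for the attachment points, and note that the strand lengths are then forced by $l_1,l_2,l_3$. The rank condition you flag as the main obstacle is precisely the step the paper asserts in one sentence (``the lengths of the strands are determined by the boundary lengths''), and it does go through: for each of the finitely many tight configurations the genus-zero requirement rules out the ``linked'' attachments that would lower the rank, so $(\alpha_{ij})$ has rank $s$ and no $L$-dependent factor appears.
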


\begin{proof} A tight pair of pants consists of some clusters (which
 may be collections of triangles or may be single points) joined by some
strands. There are several combinatorial types,
including:
\begin{enumerate}
\item One annulus-type cluster, joined to itself with one strand.
\item Two disk-type clusters, joined by three strands running between
 them.
\item One disk-type cluster, joined to itself by two strands.
\end{enumerate}

What matters to us is that there are only a finite number of
combinatorial types. To see this, we begin by observing that each
cluster of triangles has to be a subsurface with genus 0 and at most 3
boundary components, so there are finitely many types of cluster. If
$c$ is the number of clusters, $s$ is the number of strands, and $b$
is the total of the first Betti numbers of the clusters in a pair of pants
$P$, then the first Betti number of $P$, $\beta_1(P)$ is given by
$\beta_1(P)=b+s-c+1$. If $P$ is tight, each disk-type cluster has
degree at least 3, so each cluster contributes at least $1/2$ to
$\beta_1(P)$. As such, there can be at most 2 clusters in a tight pair
of pants and at most 3 strands, so there are at most, say, $100$ types
of pairs of pants.

We can now prove that for each combinatorial type, there are $\precsim
e^{A}$ tight pairs of pants with $A$ triangles. Since there are a
finite number of combinatorial types, this will imply the lemma.

By Lemma~\ref{lem:countLowGenus}, there is a $c>0$ such that there are
at most $e^{cA}$ discs or annuli or three-holed spheres with $A$
triangles. Now 
to build a tight pair of pants, we have to add strands to the triangulated surfaces. We have to choose the
attaching points. There are at most six attaching points, and each attaching point has at most $3 A$ choices,
and so there are $\le (3A)^6$ choices of attaching points. Once we have chosen where to attach each strand,
the lengths of the strands are determined by the lengths $l_1, l_2,$
and $l_3$ of the three boundary circles. Thus the total number of
tight pants with fixed boundary lengths, a fixed combinatorial type, and $A$ triangles is bounded by
$100 e^{2 c A} (3A)^6 \precsim e^{A}$.
\end{proof}

\begin{remark}
 In hyperbolic geometry, there is a unique hyperbolic pair of
 pants for every choice of boundary lengths $l_1, l_2,$ and $l_3$.
 The closest combinatorial analogue of this phenomenon is the fact
 that tight pairs of pants with no triangles are determined by their
 boundary lengths. For every triple of lengths, $l_1, l_2, l_3 \in
 \mathbb{Z}$, there is a unique triangle-less pair of pants with the
 given lengths. If the lengths obey the triangle inequality, the
 graph looks like $\theta$ and if not the graph looks like a pair of
 glasses, that is, two circles connected by a line. If we are not careful when we add triangles, the number
 of pairs of pants explodes: if we add one triangle, we have $\approx l_1
 + l_2 + l_3$ different edges where we can put it, so we get many
 different pairs of pants. For this reason, we introduced tight
 pairs of pants; tightness restricts the possible places that a
 triangle can go. The number of tight pairs of pants with fixed
 boundary lengths is bounded by $\exp (A)$, with constant independent
 of the chosen boundary lengths. The $\exp(A)$ factor
 is fairly harmless, so tight pairs of pants are a good analogue of
 hyperbolic pairs of pants.
\end{remark}

Recall that a combinatorial pants decomposition consisted of a
collection of pants and some gluing information. Next we consider
combinatorial analogues of length and twist parameters (i.e.,
Fenchel-Nielsen coordinates) and bound the number of possible ways to
glue pants.

\begin{lemma} There is a $C$ such that the number of genus $g$
 combinatorial pants decompositions with total pants length $\le L$
 and total area $\le N$ is bounded by $\le C g^g (L/g)^{6g} exp(C N)$.
\end{lemma}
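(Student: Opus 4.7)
The plan is to build up a combinatorial pants decomposition by making a sequence of choices and to bound the number of options at each stage, mirroring the Fenchel–Nielsen calculation in Section~2.

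First, I would choose the topological type of the pants decomposition, i.e.\ the trivalent graph $G$ with $2g-2$ vertices and $3g-3$ edges encoding which pants are glued to which. By Lemma~\ref{lem:countEquivs} (applied with $2n=2g-2$), there are $\approx g^{g}$ such graphs. Next I would assign a positive integer length $l_i$ to each of the $3g-3$ edges, subject to $\sum l_i \le L$. The number of such length tuples is at most the number of lattice points in the simplex $\Delta_L$ of dimension $3g-3$, which is $\le \binom{L+3g-3}{3g-3} \le \frac{(L+3g-3)^{3g-3}}{(3g-3)!} \lesssim (L/g)^{3g}$ by Stirling (absorbing the polynomial prefactor into an exponential in $g$, hence into $\exp(CN)$).

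Now for each vertex $v$ of $G$ the three boundary lengths of the corresponding pair of pants are determined. I would then choose a tight combinatorial pair of pants with those prescribed boundary lengths; by Lemma~\ref{lem:countPants}, there are at most $c_{0} e^{c_{0} A_{v}}$ such pairs with $A_{v}$ triangles. Summing over area distributions $(A_{v})_{v}$ with $\sum_{v} A_{v} \le N$, the total number of choices is
\[
\sum_{\sum A_{v}\le N}\prod_{v}c_{0}e^{c_{0}A_{v}}\;\le\;c_{0}^{2g-2}\binom{N+2g-2}{2g-2}e^{c_{0}N}\;\le\;\exp(C_{1}N)
\]
for some $C_{1}>0$, since the Euler characteristic forces $g\lesssim N$. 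Finally, at each edge of $G$ I would pick a twist (shift) parameter in $\{0,1,\dots,l_i-1\}$ specifying how the two boundary curves of length $l_i$ are identified; this contributes $\prod_{i=1}^{3g-3} l_i$. By the arithmetic–geometric mean inequality, $\prod l_i \le (L/(3g-3))^{3g-3}\lesssim (L/g)^{3g}$.

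Multiplying the four bounds gives $g^{g}\cdot (L/g)^{3g}\cdot \exp(C_{1}N)\cdot (L/g)^{3g} = g^{g}(L/g)^{6g}\exp(C_{1}N)$, up to factors that are polynomial in $g$ and hence dominated by $\exp(CN)$ after enlarging the constant. The main obstacle is the third step: one has to see that the potentially wild variety of triangulated pants with fixed boundary lengths is tamed by the tightness hypothesis, which is exactly what Lemma~\ref{lem:countPants} provides, and that the product $\prod e^{c_0 A_v}$ remains controllable after summing over the area distribution. Every other step is either a combinatorial counting of lattice points or a direct application of AM--GM, and any double counting (different choices yielding the same decomposition) only strengthens the upper bound.
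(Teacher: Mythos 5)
Your proposal is correct and follows essentially the same route as the paper: choose the topological type ($\approx g^g$ by the trivalent-graph count), the integer boundary lengths ($\lesssim (L/g)^{3g}$ via the simplex), the area distribution and a tight pair of pants for each vertex ($\lesssim \exp(CN)$ via the tight-pants lemma), and the integer twist parameters ($\lesssim (L/g)^{3g}$ by AM--GM). The only differences are cosmetic bookkeeping choices, such as counting length tuples by lattice points rather than by comparison with the volume of the simplex.
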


\begin{proof} In this proof, we will write $f(g,L,N) \lesssim
 h(g,L,N)$ to mean that there is some $c$ such that $f(g,L,N) \le
 ce^{cN}h(g,L,N)$ for all applicable $g,L,N$. Note that we may
 assume that $g\le N/4$, and so $e^g \lesssim 1$ as usual.

 As with hyperbolic pants decompositions, a genus-$g$ combinatorial
 pants decomposition has a topological type. By
 Lemma~\ref{lem:countEquivs}, the number of topological types of
 genus $g$ is $\approx g^g$.

Now we count tight pants decompositions with a fixed topological type. 
We first have to choose the lengths of the $3g -3$ boundary curves in the pants decomposition. How many 
ways can we choose positive integers $l_1, ..., l_{3g-3}$ so that $\sum l_i \le L$? This number is less than the volume
of the set $x_i \ge 0, \sum x_i \le L$. The volume of that simplex
can be computed by induction on the dimension; it has volume
$ \frac{1}{ (3g-3)! } L^{3g-3} \lesssim (L/g)^{3g}$.

For each choice of lengths, we next have to choose how many triangles to put in each pair of pants. Here we 
have to choose $A_1, ..., A_{2g-2} \ge 0$ with $\sum A_i = N$. The number of ways to choose $A_i$ is
exactly ${ N + 2g - 3 \choose 2g-3} \le 2^{N + 2g - 3} \le 4^N$. (Since $g \le N/4$.) So the number of ways
of choosing $A_i$ is $\lesssim 1$.

Next we have to choose a tight pants structure for each pair of pants
with the given area $A_i$ and the given boundary lengths. If $c_0$ is
the constant from Lemma~\ref{lem:countLowGenus}, then the number of ways
to do this is at most $\prod_i c_0 exp(c_0 A_i) \lesssim 1$.

Now we count the number of possible gluings. Since we already chose the topological
type, a gluing
is determined by its twist
parameters. For each of the $3g-3$ curves, the twist parameter is an integer in
the range $0 \le t_i \le l_i - 1$. The number of choices for the
twist parameters is 
$$\prod_{j= 1}^{3g-3} l_j \le ( \frac{L}{3g-3})^{3g-3} \lesssim (L/g)^{3g}.$$

Multiplying all of these together, we find that the number of possible
pants decompositions is
$$\lesssim g^g
\biggl(\frac{L}{g}\biggr)^{3g} \biggl(\frac{L}{g}\biggr)^{3g}\lesssim
g^g \biggl(\frac{L}{g}\biggr)^{6g},$$
as desired.
\end{proof}

In particular, the number of underlying
surfaces (up to simplicial isomorphism) is $\lesssim (L/g)^{6g}
g^g$. This proves the main estimate.

The total number of combinatorial surfaces in $\com{N}$ is
$\approx N^{N/2}$. If $N$ is sufficiently large and $L = N^{7/6 - \varepsilon}$,
then the number of surfaces in $\com{N}$ with genus $\ge 2$ and total pants length
at most $L$ is
$$\lesssim \sum_{i=2}^{(N+2)/4} e^{c N} (L/i)^{6i} i^i\lesssim e^{c N}
N^{(\frac{1}{6}-\varepsilon)\cdot \frac{6N}{4}}
(N/4)^{\frac{N}{4}}\approx N^{N/2-3\varepsilon/2}.$$ The number of
surfaces in $\com{N}$ with genus $<2$ is $\lesssim 1$ by
Lemma~\ref{lem:countLowGenus}, so for large $N$, most surfaces in
$\com{N}$ have no pants decomposition of total length $\le L$. This
finishes the proof of Theorem 2.
 
\bibliographystyle{amsalpha}
\providecommand{\bysame}{\leavevmode\hbox to3em{\hrulefill}\thinspace}
\providecommand{\MR}{\relax\ifhmode\unskip\space\fi MR }
\providecommand{\MRhref}[2]{%
  \href{http://www.ams.org/mathscinet-getitem?mr=#1}{#2}
}
\providecommand{\href}[2]{#2}

 \end{document}